\renewcommand{\subset}{\subseteq}
\DeclareMathOperator{\cl}{cl}
\DeclareMathOperator{\ran}{ran}
\DeclareMathOperator{\sep}{Sep}
\DeclareMathOperator{\clop}{Clop}
\DeclareMathOperator{\inter}{int}
\DeclareMathOperator{\val}{val}
\newcommand{\A}{\mathcal{A}}
\newcommand{\psispc}[1]{\Psi\left(#1 \right)}
\def\moverlay{\mathpalette\mov@rlay}
\def\mov@rlay#1#2{\leavevmode\vtop{%
   \baselineskip\z@skip \lineskiplimit-\maxdimen
   \ialign{\hfil$\m@th#1##$\hfil\cr#2\crcr}}}
\newcommand{\charfusion}[3][\mathord]{
    #1{\ifx#1\mathop\vphantom{#2}\fi
        \mathpalette\mov@rlay{#2\cr#3}
      }
    \ifx#1\mathop\expandafter\displaylimits\fi}
\newcommand{\cupdot}{\charfusion[\mathbin]{\cup}{\cdot}}
\newcommand{\bigcupdot}{\charfusion[\mathop]{\bigcup}{\cdot}}
\newcommand {\pow}{\mathcal P}
\theoremstyle{plain}\newtheorem{teo}{Theorem}[section]
\theoremstyle{plain}\newtheorem{prop}[teo]{Proposition}
\theoremstyle{plain}\newtheorem{lema}[teo]{Lemma}
\theoremstyle{plain}\newtheorem{cor}[teo]{Corollary}
\theoremstyle{definition}\newtheorem{defin}[teo]{Definition}
\theoremstyle{remark}
\theoremstyle{definition}\newtheorem{example}[teo]{Example}
\theoremstyle{plain}\newtheorem{question}[teo]{Question}
\numberwithin{equation}{section}
\title{Almost-normality of Isbell-Mrówka spaces}
\author{Vinicius de Oliveira Rodrigues\\
  \texttt{vinior@ime.usp.br}
  \and
  Victor dos Santos Ronchim\\
 \texttt{vronchim@ime.usp.br}
}
\date{Institute of Mathematics and Statistics\\
University of São Paulo\\[2ex]
\today}
\begin{document}
\newpage

\maketitle

\begin{abstract}
    We explore almost-normality in Isbell-Mrówka spaces and some related concepts. We use forcing to provide an example of an almost-normal not normal almost disjoint family, explore the concept of semi-normality in Isbell-Mrówka spaces, define the concept of strongly $(\aleph_0, <\mathfrak c)$-separated almost disjoint families and prove the generic existence of completely separable strongly $(\aleph_0, <\mathfrak c)$-separated almost disjoint families assuming $\mathfrak s=\mathfrak c$ and $\mathfrak b=\mathfrak c$. We also provide an example of a Tychonoff almost-normal not normal pseudocompact space which is not countably compact, answering a question from P. Szeptycki and S. Garcia-Balan.\vspace{1em}
    
    \noindent\emph{2020 Mathematics Subject Classification:} Primary 54D15, 54D80, 54G20; Secondary 54A35
    
    \noindent\emph{Keywords:} Isbell-Mrówka spaces, almost disjoint families, almost-normal, semi-normal

\end{abstract}

\section{Introduction}
Isbell-Mrówka spaces are topological spaces associated to almost disjoint families. This class of spaces is used to provide examples and counter examples to numerous questions in General Topology, including questions that are initially not related to them.  The topological properties of such spaces often depend on the combinatorial properties of the associated almost disjoint family. We cite the surveys \cite{Hrusak2014} and \cite{HernandezHernandez2018} as references for this field of study.

If $N$ is a countable infinite set such that $N\cap[N]^\omega=\emptyset$, an almost disjoint family (over $N$) is an infinite collection $\mathcal A$ of infinite subsets of $N$ such that for all distinct $a, b \in \mathcal A$, $a\cap b$ is finite. A MAD family (maximal almost disjoint family) is an almost disjoint family which is not properly contained in any other almost disjoint family. By Zorn's Lemma, every almost disjoint family can be extended to a MAD family and it is well known that there exist almost disjoint families of size $\mathfrak c$ \cite{Hrusak2014}. The least cardinality of a MAD family is called $\mathfrak a$, and it is well known that $\mathfrak a\geq \omega_1$.

Given an almost disjoint family $\mathcal A$ over $N$, the Isbell-Mrówka space associated to $\mathcal A$, also known as \change{$\Psi$}-space of $\mathcal A$, and denoted by $\Psi(\mathcal A)$ is the set $N\cup \mathcal A$ with the topology generated by $\{\{n\}: n \in N\}\cup\{\{\{a\}\cup(a\setminus F): a \in \mathcal A, F \in [N]^{<\omega}\}$. It is immediate that $\mathcal A$ is a Hausdorff, locally compact (therefore Tychonoff) not countably compact zero dimensional separable topological space.

In general, $\Psi(\mathcal A)$ does not need to be normal (e.g., if $|\mathcal A|=\mathfrak c$, $|\mathcal A|$ is a closed discrete subspace of size $\mathfrak c$ of the separable space $\Psi(\mathcal A)$, so it is not normal by \change{Jones's} Lemma) but it may be normal, since $\Psi(\mathcal A)$ is metrizable iff $\mathcal A$ is countable. The existence of a uncountable normal Isbell-Mrówka space is independent of the Axioms of ZFC, and is equivalent to the existence of a normal separable non-metrizable Moore space \cite{tall1977set} \cite{qsetsnormality}, \cite{singal1973mildly}.

In this paper we study weakenings of normality on Isbell-Mrówka spaces. We say a topological space is normal iff every two closed disjoint subsets can be separated by open disjoint subsets. Various weakenings of normality have been proposed and studied, such as quasi-normality \cite{zaitsavrussian}, almost-normality \cite{singal1970almost}, mildly-normality \cite{shchepin1972real} and semi-normality \cite{singal1970almost}. In this paper we will focus on the study of almost-normality and semi-normality on Isbell-Mrówka spaces. Recent results regarding the study of some weakenings of normality and Isbell-Mrówka spaces include \cite{paulsergio} and \cite{alshammari2020quasi}.

Given a topological space $X$, a regular closed set of $X$ is a closed set $F$ such that $F=\cl(\inter (F))$, and an open set $U$ is said to be regular open iff $U=\inter( \cl (U))$. We say a topological space $X$ is \textit{almost-normal} iff whenever $F$ is a closed set and $K$ is a regular closed set disjoint from $F$, there exist disjoint open sets $U, V$ such that $F\subseteq U$, $K\subseteq V$. We say that $X$ is \textit{semi-normal} iff for every closed set $F$ and every open set $U$ containing $F$ there exists a regular open set $V$ such that $F\subseteq V\subseteq U$. The following proposition is from \cite{singal1970almost} and can be easily verified:

\begin{prop}[\cite{SingalArya}]\label{semialmost}
A topological space is normal iff it is almost-normal and semi-normal.
\end{prop}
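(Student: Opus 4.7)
The proof splits along the biconditional; I expect both directions to be short definitional manipulations, with the key trick in the harder direction being the observation that the complement of a regular open set is regular closed.

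For the forward direction, suppose $X$ is normal. Almost-normality is immediate: any regular closed set is in particular closed, so disjoint pairs in the almost-normal sense are disjoint pairs in the normal sense. For semi-normality, let $F$ be closed and $U$ open with $F\subseteq U$. Then $F$ and $X\setminus U$ are disjoint closed sets, so by normality there are disjoint open sets $W_1,W_2$ with $F\subseteq W_1$ and $X\setminus U\subseteq W_2$. Disjointness of $W_1$ and $W_2$, together with openness of $W_2$, gives $\cl(W_1)\cap W_2=\emptyset$, so $\cl(W_1)\subseteq U$. Setting $V:=\inter(\cl(W_1))$ produces a regular open set with $F\subseteq W_1\subseteq V\subseteq \cl(W_1)\subseteq U$, as required.

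For the backward direction, assume $X$ is both almost-normal and semi-normal and let $A,B$ be disjoint closed sets. The plan is to use semi-normality to "upgrade'' the open set $X\setminus B$ to a regular open neighborhood of $A$, whose complement will then be a regular closed set that we can feed into almost-normality. Concretely, applying semi-normality to $A\subseteq X\setminus B$ yields a regular open set $V$ with $A\subseteq V\subseteq X\setminus B$. Then $X\setminus V$ is regular closed (being the complement of a regular open set), contains $B$, and is disjoint from $A$. Now almost-normality applied to the closed set $A$ and the regular closed set $X\setminus V$ provides disjoint open sets $U_1\supseteq A$ and $U_2\supseteq X\setminus V\supseteq B$, which separate $A$ and $B$.

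The main (and only real) conceptual step is recognizing the right way to combine the two hypotheses in the backward direction: semi-normality is used to manufacture a regular closed set (the complement of a regular open neighborhood) that houses $B$, and only then does almost-normality become applicable. No obstacles beyond this appear; everything else is a direct unwinding of the definitions of regular open, regular closed, and the two weakenings of normality.
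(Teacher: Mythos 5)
Your proof is correct: both directions check out, including the standard facts that $\inter(\cl(W_1))$ is regular open and that the complement of a regular open set is regular closed, which are exactly the points needed. The paper cites this result from Singal--Arya and omits the verification as routine; your argument is the standard one that the authors have in mind.
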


In this paper, we say that an almost disjoint family $\mathcal A$ is [semi, almost]-normal iff $\Psi(\mathcal A)$ is [semi, almost]-normal.

In \cite{paulsergio}, P. Szeptycki, S. Garcia-Balan provided, among several other examples, an example in ZFC, of an almost disjoint family of true cardinality $\mathfrak c$ which is not almost-normal \change{but satisfies weaker separation properties, such as quasi-normality. They also showed that this example could be made MAD in case there exists a MAD family of true cardinality $\mathfrak c$}. They asked the following question (Question 4.2 of \cite{paulsergio}):
\begin{question}\label{question42}Is there an almost-normal not normal almost disjoint family?
\end{question}

Recall $\Psi(\mathcal A)$ is pseudocompact iff $\mathcal A$ is MAD \cite{Hrusak2014}. Thus, if $\mathcal A$ is MAD, it cannot be normal since, as a consequence of Tietze's theorem, pseudocompact normal spaces are countably compact. The authors of \cite{paulsergio} also asked the following (Question 4.3 and 4.4 of \cite{paulsergio}):
\begin{question}\label{question43}Is there an almost-normal MAD family?
\end{question}

\begin{question}\label{question44}Are almost-normal pseudocompact spaces countably compact?
\end{question}

In this paper, we use iterated forcing and a generalization of the notion of $Q$-set to provide a partial answer to Question \ref{question42} (consistently, yes) and answer negatively Question \ref{question44} in ZFC by providing a subspace of $\beta \omega$ which serves as a counter example. Question \ref{question43} remains open.

In \cite{paulsergio}, they also define the concept of strongly $\aleph_0$-separated almost disjoint family, which is related to almost-normal almost disjoint families, as follows: an almost disjoint family $\mathcal A$ (over $N$) is said to be strongly $\aleph_0$-separated iff for every two countable disjoint subsets $\mathcal B, \mathcal C$ of $\mathcal A$ there exists $X\subseteq N$ such that:

\begin{enumerate}[label=(\arabic*)]
    \item For every $a \in \mathcal A$, $a\subseteq^* X$ or $A\cap X=^*\emptyset$;
    \item for every $a \in \mathcal B$, $a\subseteq^* X$;
    \item for every $a \in \mathcal C$, $a\cap X=^*\emptyset$.
\end{enumerate}

They showed that every almost-normal almost disjoint family is strongly $\aleph_0$-separated and showed that, under $CH$, there exist MAD families which are strongly $\aleph_0$-separated. In the last section of this paper we define a stronger concept we call strongly $(\aleph_0, <\mathfrak c)$-separated almost disjoint family and prove that $\mathfrak b=\mathfrak c$ plus $\mathfrak s=\mathfrak c$ implies the generic existence of $(\aleph_0, <\mathfrak c)$-separated completely separable MAD families.

Regarding notation, we define some of the set theoretical topological and cardinal characteristcs concepts as we need them, for undefined concepts we refer (resp.) to \cite{kunen2011set}, \cite{engelking1977general} and \cite{blass2010}.

It is worth mentioning a stronger version of almost-normality, called $\pi$-normality, was proposed \cite{kalantan2008}: a subset of a topological space $X$ is said to be $\pi$-closed if it is a finite intersection of regular closed sets, and $X$ is said to be $\pi$-normal iff whenever $F\subseteq X$ is $\pi$-closed, $K\subseteq X$ is closed and $F\cap K=\emptyset$, there exists disjoint open sets separating $F$ from $K$. However, in \cite{paulsergio} it was proven that almost-normality and $\pi$-normality are equivalent.

\section{A Tychonoff, almost-normal, pseudocompact space which is not countably compact}

In this section we give, in ZFC, a negative answer for Question \ref{question44} by constructing a suitable subspace of $\beta \omega$.

As noted by Kalantan in \cite{kalantan}, extremely disconnected spaces are almost-normal since every regular closed set is a \change{clopen set}, so it can be separated from any set disjoint from it. This fact will be useful to obtain our counterexample.

The following lemma is well known and can be easily proved by the reader. We refer \cite{engelking1977general}.
\begin{lema}\label{lema:dense extreme disconnected}

If $X$ is extremally disconnected and $D\subseteq X$ is a dense subset, then $D$ is also extremally disconnected.
\end{lema}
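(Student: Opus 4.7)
The plan is to verify directly that the closure in $D$ of any open subset of $D$ is open in $D$, which is the definition of extremal disconnectedness. Let $U$ be open in $D$. By definition of the subspace topology, write $U = V \cap D$ for some $V$ open in $X$. Since $X$ is extremally disconnected, $\cl_X(V)$ is open in $X$, hence $\cl_X(V) \cap D$ is open in $D$. So it will suffice to prove the equality
\[
\cl_D(U) \;=\; \cl_X(V) \cap D.
\]

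For the inclusion $\subseteq$, note that $U \subseteq V$, so $\cl_D(U) \subseteq \cl_X(U) \cap D \subseteq \cl_X(V) \cap D$. The reverse inclusion $\supseteq$ is where density is used: given $x \in \cl_X(V) \cap D$ and any $D$-open neighborhood $W$ of $x$, write $W = W' \cap D$ with $W'$ open in $X$ and $x \in W'$. Since $x \in \cl_X(V)$, the set $W' \cap V$ is a nonempty open subset of $X$, and because $D$ is dense in $X$ it meets $D$. Thus $W \cap U = W' \cap D \cap V = W' \cap V \cap D \neq \emptyset$, showing $x \in \cl_D(U)$.

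Combining these two observations, $\cl_D(U) = \cl_X(V) \cap D$ is open in $D$, so $D$ is extremally disconnected. There is no real obstacle here; the only subtlety is that one must be careful to represent $U$ as $V \cap D$ and then take $\cl_X$ of the ambient set $V$ (not of $U$ itself) before intersecting with $D$, since density of $D$ is exactly what makes the pullback of the closure agree with the closure in $D$.
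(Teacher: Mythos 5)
Your proof is correct and follows essentially the same route as the paper's (omitted) argument: represent the $D$-open set as $V\cap D$ and use density to show $\cl_D(V\cap D)=\cl_X(V)\cap D$, which is open since $\cl_X(V)$ is. No issues.
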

The following Lemma is also known. We prove it for the sake of completeness.

\begin{lema}\label{lema:sequences implies pseudocompact}
If $D\subseteq X$ is dense and every sequence in $D$ has an accumulation point in $X$, then $X$ is pseudocompact.
\end{lema}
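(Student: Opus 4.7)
My plan is to prove the contrapositive form directly: assuming $X$ is not pseudocompact, I will produce a sequence in $D$ with no accumulation point in $X$. Recall that $X$ fails to be pseudocompact exactly when there is a continuous function $f : X \to \mathbb{R}$ that is unbounded; by replacing $f$ with $|f|$ if necessary, I may assume $f$ is unbounded above.

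First I would construct the sequence. For each $n \in \omega$, the set $U_n = f^{-1}((n, \infty))$ is open and non-empty. Since $D$ is dense in $X$, the intersection $U_n \cap D$ is non-empty, so I can pick $d_n \in D$ with $f(d_n) > n$. This yields a sequence $(d_n)_{n \in \omega}$ in $D$ along which $f$ tends to $+\infty$.

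Next I would invoke the hypothesis: the sequence $(d_n)$ must have an accumulation point $p \in X$, that is, every open neighborhood of $p$ contains $d_n$ for infinitely many $n$. Using continuity of $f$ at $p$, choose an open neighborhood $V$ of $p$ with $f(V) \subseteq (f(p)-1, f(p)+1)$. Since $p$ is an accumulation point of $(d_n)$, there are infinitely many indices $n$ with $d_n \in V$; for these indices we would have simultaneously $f(d_n) > n$ and $f(d_n) < f(p) + 1$, forcing $n < f(p) + 1$, which holds for only finitely many $n$. This contradiction completes the argument.

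There is no real obstacle here: the only subtlety is making sure that the density of $D$ is used to transfer the unboundedness of $f$ on $X$ to unboundedness of $f$ along a sequence in $D$, and that ``accumulation point'' is understood in the standard sense (every neighborhood meets the sequence on an infinite set of indices), which makes the continuity step work cleanly.
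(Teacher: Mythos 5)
Your argument is correct and follows essentially the same route as the paper: take an unbounded continuous function, use density of $D$ to pick $d_n \in D$ with $f(d_n) > n$, and then contradict the existence of an accumulation point via continuity. The only cosmetic difference is that the paper phrases the final contradiction as $f(x)\geq m$ for all $m$ using closures of tails, while you use a bounded neighborhood of the accumulation point; both are fine.
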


\begin{proof}
Suppose, by contradiction, that $X$ is not pseudocompact. There exists an unbounded continuous function $h:X\rightarrow [0, \infty)\subseteq \mathbb R$. For each $n \in \omega$, let $d(n) \in D\cap h^{-1}[(n, \infty)]$. Then $d:\omega\rightarrow D$ has no limit point $x$, for if it had, we would have $x \in \cl(\{d(n): n\geq m\})$ for every $m \in \omega$, thus, by continuity, $f(x)\geq m$ for every $m \in \omega$, a contradiction. 
\end{proof}

Now we present our example. For the construction, we identify $\beta \omega$ with the space of ultrafilters over $\omega$, where $\mathcal U_n$ is the principal ultrafilter generated by $\{n\}$ for each $n \in \omega$ (and $n$ is identified with $\mathcal U_n$). We write $N=\{\mathcal U_n: n \in \omega\}$. $\omega^*\subseteq \beta \omega$ is the set of free ultrafilters over $\omega$. Given $A\subseteq \omega$, $\hat A$ is the basic \change{clopen set} $\{p \in \beta \omega: A \in p\}$.

\begin{example}\label{example:answer4.4}
There exists a Tychonoff extremely disconnected (thus, almost normal) pseudocompact space which is not countably compact.
\end{example}

\begin{proof}[Construction]
Let $(P_n: n \in \omega)$ be a partition of $\omega$ into pairwise disjoint infinite sets. For each $n \in \omega$, let $p_n$ be a free ultrafilter such that $P_n \in p_n$. Let $F=\{p_n: n \in \omega\}$. $F$ is infinite and discrete since given $n$, $\{p_n\}=F\cap \hat P_n$.

Given $A \in [\omega]^{\omega}$, let $q_A \in \omega^*$ be defined as follows:

\begin{enumerate}[label=(\arabic*)]
    \item If there exists $n\in \omega$ such that $A \in p_n$, let $q_A=p_n$, for any such $n$ (e.g. the least such $n$), or
    \item if for all $n \in \omega$ $A\notin p_n$, let $q_A \in \omega^*$ be any free ultrafilter such that $A \in q_A$.
\end{enumerate}

In any case, $A \in q_A$. Let $X=N\cup \{q_A: A \in [\omega]^{\omega}\}$ and notice that, for each $n\in \omega$, $q_{P_n}=p_n$ by (1). Hence, $F\subseteq X$.

$X$ is a dense subspace of $\beta \omega$ (since it contains $N$) and by Lemma~\ref{lema:dense extreme disconnected}, $X$ is extremely disconnected. In particular, $X$ is also almost normal.

\underline{$X$ is pseudocompact:} since $N$ is dense in $X$, it suffices to see that every sequence $f:\omega\rightarrow N$ has an accumulation point. By passing to a subsequence, we can suppose $f$ is either constant or injective. Constant sequences converge, so suppose $f$ is injective. Let $g:\omega\rightarrow \omega$ be such that $f(n)=\mathcal U_{g(n)}$. Let $A=\ran (g)$. We claim $q_A$ is an accumulation point of $f$. Given a basic nhood $\hat B \ni q_A$, we know $B\cap A \in q_A$ is infinite, so it follows that $g^{-1}[A\cap B]\subseteq\{n \in \omega: f(n) \in \hat B\}$ is also infinite. Since $B$ is arbitrary, the proof is complete.

\underline{$X$ is not countably compact:} we know $F$ is an infinite discrete subspace of $X$ (since it is in $\beta \omega$). Thus, it suffices to show that $F$ is closed in $X$. We show $X\setminus F$ is open in $X$. Clearly, every point of $N$ is in the interior of $X\setminus F$ since $N$ is open. If $A \in [\omega]^{\omega}$ and $q_A \notin F$, then (2) holds, so $q_A \in\hat A$ and $F\cap \hat A=\emptyset$, that is, $q_A\in X\cap \hat A\subseteq X\setminus F$.
\end{proof}

The space constructed in Example~\ref{example:answer4.4} answers negatively the Question 4.4 from \cite{paulsergio}. It is worth mentioning that Isbell-Mrówka spaces are never extremally disconnected, so a similar strategy cannot be employed when trying to address Questions 4.2 and 4.3.

\section{Equivalences for almost-normality in \texorpdfstring{$\Psi(\A)$}{Psi(A)}}
In this section we start to explore the notion of almost-normality in the realm of Isbell-Mrówka spaces. In particular, we aim to provide some characterizations for ``$\mathcal A$ is almost-normal''. In order to do so, we will use the well known notion of a partitioner of an almost disjoint family. As in the introduction, $N$ denotes an infinite countable set for which $N\cap[N]^\omega=\emptyset$.

\begin{defin}
Let $\A$ be an almost disjoint family (over $N$). We say that $X\subseteq N$ is a partitioner for $\A$ if for each $a\in \A$, $a\subseteq^* X$ or $a\cap X =^* \emptyset$. 

We say that a partitioner $X$ for $\mathcal A$ is a partitioner for $\mathcal B,\mathcal C\subseteq\A$ if $b\subseteq^* X$ and $c\cap X=^*\emptyset$ for each $b\in \mathcal B$ and $c\in \mathcal C$.
\end{defin}
The main motivation for our equivalences is the following classical result. We give \cite{Hrusak2014} and \cite{qsetsnormality} as references.
\begin{prop}
Let $\mathcal A$ be an almost disjoint family. Then $\A$ is normal if, and only if, for all $\mathcal B\subseteq \mathcal A$, $\mathcal B$ and $\mathcal A\setminus \mathcal B$ can be separated by disjoint open sets of $\psi(\mathcal A)$.
\end{prop}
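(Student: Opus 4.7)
The plan is to handle the two directions separately. The forward implication is essentially automatic: since $\mathcal{A}$ is a closed discrete subspace of $\Psi(\mathcal{A})$ (each $a \in \mathcal{A}$ has a basic neighborhood meeting $\mathcal{A}$ only at $a$, while every $n \in N$ has the singleton $\{n\}$ as a neighborhood disjoint from $\mathcal{A}$), every subset of $\mathcal{A}$ is closed in $\Psi(\mathcal{A})$. Hence if $\Psi(\mathcal{A})$ is normal, then for any $\mathcal{B} \subseteq \mathcal{A}$ the closed disjoint sets $\mathcal{B}$ and $\mathcal{A}\setminus\mathcal{B}$ are separated by disjoint open sets.

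For the converse, suppose every pair $\mathcal{B}, \mathcal{A}\setminus\mathcal{B}$ can be separated. The first observation I would record is that this already yields separation of any two disjoint subfamilies $\mathcal{B}_1, \mathcal{B}_2 \subseteq \mathcal{A}$, simply by separating $\mathcal{B}_1$ from $\mathcal{A} \setminus \mathcal{B}_1 \supseteq \mathcal{B}_2$. Now take arbitrary disjoint closed sets $F, G \subseteq \Psi(\mathcal{A})$. Apply the hypothesis to the disjoint subfamilies $\mathcal{B}_F := F \cap \mathcal{A}$ and $\mathcal{B}_G := G \cap \mathcal{A}$ to obtain disjoint open sets $U_0, V_0$ with $\mathcal{B}_F \subseteq U_0$ and $\mathcal{B}_G \subseteq V_0$.

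To build the separation of $F$ and $G$ themselves, I would exploit that points of $N$ are isolated. For each $a \in \mathcal{B}_F$, since $G$ is closed and $a \notin G$, and $U_0$ is a neighborhood of $a$, there is a finite $F_a \subseteq N$ such that $B_a := \{a\} \cup (a \setminus F_a) \subseteq U_0$ and $B_a \cap G = \emptyset$ (intersect a basic neighborhood of $a$ contained in $U_0$ with one disjoint from $G$). Symmetrically, choose $B_a \subseteq V_0$ with $B_a \cap F = \emptyset$ for each $a \in \mathcal{B}_G$. Then define
$$U = (F \cap N) \cup \bigcup_{a \in \mathcal{B}_F} B_a, \qquad V = (G \cap N) \cup \bigcup_{a \in \mathcal{B}_G} B_a.$$
Both sets are open (isolated points plus basic neighborhoods) and contain $F$, $G$ respectively. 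Disjointness breaks into cases: $(F \cap N) \cap (G \cap N) = \emptyset$ since $F \cap G = \emptyset$; the cross-terms $(F \cap N) \cap B_a$ for $a \in \mathcal{B}_G$ vanish because $B_a$ was chosen disjoint from $F$, and symmetrically; finally, for $a \in \mathcal{B}_F$ and $a' \in \mathcal{B}_G$, $B_a \cap B_{a'} \subseteq U_0 \cap V_0 = \emptyset$.

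I do not expect any serious obstacle here; the only delicate point is the simultaneous shrinking step, where for each $a \in \mathcal{B}_F$ one must choose a single basic neighborhood lying inside $U_0$ \emph{and} missing $G$, which is possible because the basic neighborhoods at $a$ form a neighborhood base and the finite intersection of two open sets containing $a$ still contains such a basic neighborhood. Thus the whole argument reduces to using the hypothesis on the $\mathcal{A}$-parts of $F$ and $G$ and then patching in the (trivially separable) isolated parts $F \cap N$, $G \cap N$.
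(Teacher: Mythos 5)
Your proof is correct. The paper states this proposition as a classical result (citing \cite{Hrusak2014} and \cite{qsetsnormality}) and gives no proof of its own; your argument --- the forward direction via $\mathcal A$ being a closed discrete subspace of the separable space $\Psi(\mathcal A)$, and the converse by separating the trace families $F\cap\mathcal A$ and $G\cap\mathcal A$ using the hypothesis and then shrinking basic neighborhoods to miss the opposite closed set while patching in the isolated points of $F\cap N$ and $G\cap N$ --- is precisely the standard folklore argument, and all the case checks for disjointness go through as you describe.
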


Recall there is a one to one correspondence between the \change{clopen subsets} of $\Psi(\mathcal A)$ and the partitioners of $\mathcal A$ which can be defined as follows: for each $X\subseteq N$ consider $\mathscr{B}_X=\{a \in\A: a\subseteq^* X\}$ and $\mathscr{C}_X=\{a\in\A: a\cap X=^*\emptyset\}$. It follows that:
\begin{itemize}
    \item $\mathscr{B}_X\cup X$ and $\mathscr{C}_X\cup (N\setminus X)$ are disjoint open subsets of $\psispc{\A}$;
    \item If $X$ is a partitioner for $\psispc{\A}$, then $\A= \mathscr{B}_X\cupdot \mathscr{C}_X$ and $\psispc{\A} = (\mathscr{B}_X\cup X)\bigcupdot (\mathscr{C}_X\cup (N\setminus X))$ is union of \change{clopen subsets}. 
\end{itemize}

Then it easily follows that:
\begin{lema}\label{lemma: sepclop}
If $\A$ is an almost disjoint family, then $F:\clop(\psispc{\A})\longrightarrow \{X\subseteq N: X \text{ is a partitioner for } \mathcal A\}$, defined by $F(W)= W\cap N$, is a bijective function, with inverse given by $F^{-1}(X)=\mathscr{B}_X \cup X$.
\end{lema}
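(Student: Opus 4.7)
My plan is to verify the four ingredients that together make $F$ a bijection with inverse $X \mapsto \mathscr B_X \cup X$: namely, that $F$ maps into the set of partitioners, that $F^{-1}$ as described maps into $\clop(\psispc{\A})$, and that the two compositions are the identity. The well-definedness of $F^{-1}$ is essentially already done in the bullet points preceding the lemma (they show $\mathscr B_X \cup X$ and $\mathscr C_X \cup (N \setminus X)$ are disjoint open sets partitioning $\psispc{\A}$, whence each is clopen), so the real content lies in checking that $F$ is well-defined and that the compositions are identity.

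For well-definedness of $F$, I would fix a clopen $W \subseteq \psispc{\A}$ and $a \in \A$. Since both $W$ and $\psispc{\A}\setminus W$ are clopen, $a$ lies in exactly one of them. If $a \in W$, openness produces a basic neighborhood $\{a\} \cup (a \setminus F) \subseteq W$ with $F \in [N]^{<\omega}$, and since $a \setminus F \subseteq N$ this gives $a \setminus F \subseteq W \cap N$, i.e.\ $a \subseteq^* W \cap N$. The case $a \notin W$ is symmetric (apply the same argument to the clopen complement) and yields $a \cap (W \cap N) =^* \emptyset$; together these show $W \cap N$ is a partitioner.

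The identity $F(F^{-1}(X)) = X$ reduces to checking $(\mathscr B_X \cup X) \cap N = X$, which is immediate from $\mathscr B_X \subseteq \A$ and $\A \cap N = \emptyset$ (the standing convention $N \cap [N]^\omega = \emptyset$ prevents $\A$ from meeting $N$). The step I expect to require the most care is $F^{-1}(F(W)) = W$ for clopen $W$, i.e.\ the equality $W = \mathscr B_{W \cap N} \cup (W \cap N)$. For the inclusion $\supseteq$: clearly $W \cap N \subseteq W$, and if $a \in \mathscr B_{W \cap N}$ then $a \setminus F \subseteq W \cap N \subseteq W$ for some finite $F$; since every basic neighborhood of $a$ contains cofinitely many elements of $a$, the point $a$ is an accumulation point of $a \setminus F$, so closedness of $W$ gives $a \in W$. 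For $\subseteq$: points in $W \cap N$ are trivially on the right, and if $a \in W \cap \A$ then openness of $W$ provides a basic neighborhood $\{a\} \cup (a \setminus F) \subseteq W$, so $a \subseteq^* W \cap N$ and thus $a \in \mathscr B_{W \cap N}$.
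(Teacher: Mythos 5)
Your proof is correct and follows the same route the paper intends (the paper omits the argument as immediate from the preceding bullet points): you check that $W\cap N$ is a partitioner via basic neighborhoods of points of $\A$ inside the clopen set and its complement, and verify both compositions are the identity using openness and closedness of $W$. No gaps.
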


The regular closed subsets of $\Psi(\mathcal A)$ are easily characterized by the following proposition:
\begin{lema} Let $\A$ be an almost disjoint family. Then $F\subseteq \Psi(\A)$ is a regular closed set iff there exists $W\subseteq N$ such that $F=\cl(W)=W\cup\{a \in \mathcal A: |a\cap W|=\omega\}$.
\end{lema}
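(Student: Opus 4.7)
The plan is to leverage the two basic features of $\Psi(\mathcal{A})$: points of $N$ are isolated, and basic neighborhoods of $a\in\mathcal{A}$ have the form $\{a\}\cup(a\setminus F')$ with $F'\in[N]^{<\omega}$. I would first verify the closure formula in the displayed equality, then prove the two implications.

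\textbf{Closure formula.} Since each $n\in N$ is isolated, $W\subseteq\mathrm{cl}(W)$. For $a\in\mathcal{A}$, $a\in\mathrm{cl}(W)$ iff every basic neighborhood $\{a\}\cup(a\setminus F')$ meets $W$, i.e.\ iff $(a\setminus F')\cap W\neq\emptyset$ for every finite $F'\subseteq N$, which (using $a\subseteq N$) is equivalent to $|a\cap W|=\omega$. Hence $\mathrm{cl}(W)=W\cup\{a\in\mathcal{A}:|a\cap W|=\omega\}$.

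\textbf{$(\Leftarrow)$.} Given $W\subseteq N$, set $F=\mathrm{cl}(W)$. Because each point of $W$ is isolated and lies in $F$, we have $W\subseteq\mathrm{int}(F)$, so $F=\mathrm{cl}(W)\subseteq\mathrm{cl}(\mathrm{int}(F))$. The reverse inclusion $\mathrm{cl}(\mathrm{int}(F))\subseteq\mathrm{cl}(F)=F$ is automatic, so $F=\mathrm{cl}(\mathrm{int}(F))$, i.e.\ $F$ is regular closed.

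\textbf{$(\Rightarrow)$.} Suppose $F$ is regular closed and take $W=F\cap N$. Then $W\subseteq F$ gives $\mathrm{cl}(W)\subseteq F$ immediately. For the converse, fix $a\in F\cap\mathcal{A}$. From $F=\mathrm{cl}(\mathrm{int}(F))$, every basic neighborhood $\{a\}\cup(a\setminus F')$ of $a$ meets $\mathrm{int}(F)$, which (since $a\notin\mathrm{int}(F)$ need not hold, but in either case) forces $(a\setminus F')\cap\mathrm{int}(F)\neq\emptyset$ for every finite $F'$. Any witness lies in $\mathrm{int}(F)\cap N\subseteq F\cap N=W$, so $a\cap W$ is infinite, i.e.\ $a\in\mathrm{cl}(W)$. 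Points of $F\cap N$ are already in $W\subseteq\mathrm{cl}(W)$. Hence $F\subseteq\mathrm{cl}(W)$.

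I do not foresee any substantial obstacle: the argument is purely bookkeeping with basic neighborhoods, once the closure formula is in hand. The only mildly delicate point is checking the boundary case $a\in F\setminus\mathrm{int}(F)$ in the $(\Rightarrow)$ direction, but it is handled uniformly by observing that basic neighborhoods of $a$ are cofinite subsets of $a$ together with $\{a\}$, so meeting $\mathrm{int}(F)$ forces meeting $\mathrm{int}(F)\cap N$.
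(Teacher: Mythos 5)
Your proposal is correct and follows essentially the same route as the paper: verify the closure formula via basic neighborhoods, get regular closedness of $\cl(W)$ from $W\subseteq\inter(\cl(W))$, and for the converse take $W=F\cap N$ and check $F=\cl(W)$ (which the paper leaves as "straightforward"). Your filled-in details for the boundary case $a\in F\setminus\inter(F)$ are sound, since $\inter(F)$ being open and meeting every punctured basic neighborhood of $a$ forces it to contain infinitely many points of $a\cap N$.
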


\begin{proof}
First, notice that given a subset $W$ of $N$, $W\subseteq \inter (\cl(W))$, therefore $\cl(W) \subseteq \cl(\inter(\cl (W)))$, concluding that $\cl (\inter (\cl (W)))=\cl (W)$ since $\cl (W)$ is closed. Also, it is easy to see that $\cl(W)=W\cup\{a \in \mathcal A: |a\cap W|=\omega\}$. This proves the ``if'' clause.

To prove the ``only if'', suppose $F$ is a regular closed set. Let $W=F\cap N$. \change{It is straightforward to verify that $F=\cl(W)$}.
\end{proof}

\begin{lema}\label{lemma: separa por clopen}
If $\psispc{\A}$ is almost-normal, then for all $B,C\subseteq \A$, $B\cap C=\emptyset$, the following holds:
\[
B \text{ and } C \text{ are separated by open sets} \iff B \text{ and } C \text{ are separated by \change{clopen sets}.}
\]
\end{lema}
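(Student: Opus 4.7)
The direction $(\Leftarrow)$ is immediate, since every clopen set is open.

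For $(\Rightarrow)$, the plan is to convert open separation into a partitioner and appeal to Lemma \ref{lemma: sepclop}. Suppose $U,V$ are disjoint open sets of $\Psi(\A)$ with $B\subseteq U$ and $C\subseteq V$, and set $W := U\cap N$. For each $a\in B$ there is a finite $F$ with $\{a\}\cup(a\setminus F)\subseteq U$, so $a\subseteq^* W$ and hence $B\subseteq \cl(W)$. For each $a\in C$, $a\subseteq^* V\cap N$ which is disjoint from $W$, so $|a\cap W|<\omega$ and thus $C\cap \cl(W)=\emptyset$. By the preceding lemma, $\cl(W)$ is regular closed, and $C$ is closed since every subset of $\A$ is closed in $\Psi(\A)$.

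Applying almost-normality to the disjoint pair $\cl(W)$ (regular closed) and $C$ (closed), I would obtain disjoint open sets $O_1\supseteq \cl(W)$ and $O_2\supseteq C$. I then iterate: having constructed $O_1^{(n)}$ open with $\cl(O_1^{(n)})\cap C=\emptyset$ (which holds at $n=0$ and is preserved because $\cl(O_1^{(n+1)})\cap O_2^{(n+1)}=\emptyset$), the set $\cl(O_1^{(n)})$ is regular closed as the closure of an open set, so almost-normality yields $O_1^{(n+1)}\supseteq \cl(O_1^{(n)})$ and $O_2^{(n+1)}\supseteq C$ disjoint open. Let $\tilde O := \bigcup_{n\in\omega} O_1^{(n)}$; this is open, contains $B$, and is disjoint from $C$.

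The crux of the proof is to show that $\tilde O$ is actually clopen, equivalently that $X := \tilde O\cap N$ is a partitioner of $\A$ with $B\subseteq \mathscr{B}_X$ and $C\subseteq \mathscr{C}_X$; the clopen separator is then $\mathscr{B}_X\cup X$ by Lemma \ref{lemma: sepclop}. The containments $B\subseteq \mathscr{B}_X$ and $C\subseteq \mathscr{C}_X$ drop out of the construction: $a\in B\subseteq O_1^{(0)}$ gives $a\subseteq^* X$, and $a\in C\subseteq O_2^{(n)}$ (disjoint from $O_1^{(n)}\cap N$ for every $n$) gives $|a\cap (O_1^{(n)}\cap N)|<\omega$.

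The main obstacle I anticipate is the partitioner condition for $a\in\A\setminus(B\cup C)$: the ``Case B'' scenario where $|a\cap (O_1^{(n)}\cap N)|$ is finite for every $n$ yet $|a\cap X|=\omega$. To rule this out I would exploit that the successive ``boundary'' sets $D_n := \{a\in\A : |a\cap (O_1^{(n)}\cap N)|=|a\setminus (O_1^{(n)}\cap N)|=\omega\}$ are pairwise disjoint (because any $a\in D_n$ lies in $\cl(O_1^{(n)}\cap N)\subseteq O_1^{(n+1)}$, hence $a\subseteq^* O_1^{(n+1)}\cap N$, so $a\notin D_m$ for $m>n$). If countable iteration does not suffice, one continues transfinitely, with $W_\lambda = \bigcup_{\alpha<\lambda} W_\alpha$ at limit stages; the iteration must terminate at some stage $\alpha\le |\A|^+$, for otherwise we would have $|\A|^+$ many pairwise-disjoint nonempty subsets of $\A$. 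The delicate point, and the place I expect to spend the most care, is verifying that the invariant ``$\cl(W_\alpha)\cap C=\emptyset$'' can be maintained through limit ordinals (e.g., by choosing at each successor stage an $O_1^{(\alpha+1)}$ whose intersection with a fixed open neighborhood of $C$ is empty, so that $C$-elements cannot accumulate into the union).
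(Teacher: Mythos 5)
The backward direction and your initial setup are fine, but the forward direction has a genuine gap, and the gap sits exactly where you flagged it. Your iteration never actually rules out ``Case B'': the pairwise disjointness of the boundary sets $D_n$ only shows that each $a\in\A$ stabilizes on one side of $O_1^{(n)}\cap N$ \emph{for each fixed $n$}, but it does not prevent an $a\in\A\setminus(B\cup C)$ from meeting every $O_1^{(n)}\cap N$ finitely while meeting $\bigcup_n O_1^{(n)}\cap N$ infinitely; so $\tilde O$ need not be closed after $\omega$ steps. Passing to transfinite stages does not repair this, because at a limit $\lambda$ the invariant $\cl(W_\lambda)\cap C=\emptyset$ can fail for the same reason (a point of $C$ meeting each $W_\alpha$ finitely but $W_\lambda$ infinitely), and your proposed remedy of shrinking successor stages away from a fixed neighborhood of $C$ is not carried out. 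The termination argument is likewise incomplete at limits. As written, the proof does not close.

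The missing idea is that you should apply almost-normality to a \emph{different} pair of closed sets, after which no iteration is needed. Set $F:=\cl_{\psispc{\A}}(U_B\cap N)$, which is regular closed, and separate $F$ not from $C$ but from $\A\setminus F$, which is closed and disjoint from $F$. Almost-normality gives disjoint open $V\supseteq F$ and $W\supseteq\A\setminus F$. Now $X:=V\cap N$ is automatically a partitioner for all of $\A$: every $a\in\A$ lies either in $F$ (hence $a\subseteq^* V\cap N=X$) or in $\A\setminus F$ (hence $a\subseteq^* W\cap N$, so $a\cap X=^*\emptyset$). There is no third case to worry about, which is precisely what your choice of target sets forced you to confront. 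Since $B\subseteq F$ and $C\subseteq\A\setminus F$ (as you already verified: $b\subseteq^* U_B\cap N$ and $|c\cap U_B\cap N|<\omega$), Lemma~\ref{lemma: sepclop} turns $X$ into the clopen set $\mathscr{B}_X\cup X$ separating $B$ from $C$, and the proof is done in one application of almost-normality.
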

\begin{proof}
\change{If $B$ and $C$ are separated by disjoint open sets $U_B$ and $U_C$, respectively, then $F\doteq \cl_{\psispc{\A}}(U_B\cap N)$ is a regular closed set and $\A\setminus F$ is closed. Since $\A$ is almost-normal, there exists $V,W$ disjoint open subsets of $\psispc{\A}$ such that $F\subseteq V$ and $\A\setminus F\subseteq W$. One can verify that $X\doteq V\cap N$ is a partitioner for $\mathcal A$, then by Lemma~\ref{lemma: sepclop}, $\mathscr{B}_X\cup X$ and its complement are the desired \change{clopen sets.}}
\end{proof}

From this lemma, it easily follows that $\psispc{\A}$ is normal iff every two disjoint subsets $B,C\subseteq \A$ are separated by \change{clopen sets}. Now we are ready to characterize the almost-normality of Isbell-Mrówka space by using partitioners and \change{clopen sets}:

\begin{teo}\label{teo: equivalents to almost-normal}
If $\A$ is an almost disjoint family then the following are equivalent:
\begin{enumerate}[label=(\arabic*)]
    \item $\psispc{\A}$ is almost-normal;
    \item For each $F$ regular closed set, there exists a partitioner $X$ for $F\cap\A$ and $\A\setminus F$;
    \item For each $F$ regular closed set, there exists a \change{clopen set} $C$ such that $F\cap\A\subseteq C$ and $\A\setminus F \subseteq \psispc{\A}\setminus C$; 
    \item For each $F$ regular closed set, there exists a \change{clopen set} $C$ such that $F\subseteq C$ and $\A\setminus F \subseteq \psispc{\A}\setminus C$;
    \item Closed sets are separated from regular closed sets by \change{clopen sets}.
\end{enumerate}
\end{teo}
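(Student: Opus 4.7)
The approach is a cyclic chain of implications $(1)\Rightarrow(2)\Rightarrow(3)\Rightarrow(4)\Rightarrow(5)\Rightarrow(1)$, relying on three standing facts. First, the clopen-partitioner correspondence of Lemma~\ref{lemma: sepclop}: a clopen subset of $\psispc{\A}$ is exactly a set of the form $\mathscr{B}_X\cup X$ for a partitioner $X$ of $\A$. Second, the characterization of a regular closed set as $F=\cl(W)$ with $W=F\cap N$; in particular, $a\in\A\setminus F$ forces $|a\cap W|<\omega$. Third, $\A$ is closed and discrete in $\psispc{\A}$, so every subset of $\A$ is closed.

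The implications $(5)\Rightarrow(1)$ and $(2)\Rightarrow(3)$ are essentially formal: clopen separation is stronger than open separation, and for $(2)\Rightarrow(3)$ one just takes $C:=\mathscr{B}_X\cup X$ for the partitioner $X$ given by $(2)$. For $(1)\Rightarrow(2)$, apply almost-normality to the disjoint pair ($F$ regular closed, $\A\setminus F$ closed) to obtain disjoint open sets; these open sets also separate the subsets $F\cap\A$ and $\A\setminus F$ of $\A$, so Lemma~\ref{lemma: separa por clopen} upgrades the separation to a clopen one, whose intersection with $N$ is the desired partitioner.

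The two implications that require actual work are $(3)\Rightarrow(4)$ and $(4)\Rightarrow(5)$. Both follow the same template: modify a given partitioner either by adding $W=F\cap N$ or by removing $G\cap N$, and then verify that the partitioner property survives. For $(3)\Rightarrow(4)$, from the partitioner $X$ of the clopen witnessing $(3)$, set $Y:=X\cup W$; the extra points cause no problem for $a\in\A\setminus F$ because $a\notin F=\cl(W)$ gives $|a\cap W|<\omega$. The main obstacle is $(4)\Rightarrow(5)$. Given a closed $G$ disjoint from a regular closed $K$, apply $(4)$ to $K$: the resulting partitioner $X:=C\cap N$ then satisfies the exact equalities $\mathscr{B}_X=K\cap\A$ and $\mathscr{C}_X=\A\setminus K$, forced by the two inclusions in $(4)$. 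Let $X':=X\setminus(G\cap N)$. The critical check is that $X'$ is still a partitioner: for $a\in\mathscr{B}_X=K\cap\A$, an infinite $a\cap G\cap N$ would place $a$ in $\cl(G\cap N)\subseteq G$, contradicting $K\cap G=\emptyset$; for $a\in\mathscr{C}_X$ nothing further is needed. Since $W=K\cap N\subseteq X$ is automatically disjoint from $G\cap N$, we have $W\subseteq X'$, so $\mathscr{B}_{X'}\cup X'$ is a clopen set containing $K$ and whose complement contains $G$, completing the cycle.
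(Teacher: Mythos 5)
Your proposal is correct and follows essentially the same route as the paper: the cycle through the five conditions, the use of Lemma~\ref{lemma: separa por clopen} to get out of $(1)$, the clopen--partitioner correspondence of Lemma~\ref{lemma: sepclop} for $(2)\Leftrightarrow(3)$, adjoining $F\cap N$ for $(3)\Rightarrow(4)$, and deleting the trace of the other closed set on $N$ for $(4)\Rightarrow(5)$. The only differences are cosmetic (you chain $(1)\Rightarrow(2)\Rightarrow(3)$ where the paper does $(1)\Rightarrow(3)$ and $(2)\Leftrightarrow(3)$), and you supply the ``straightforward to verify'' checks, correctly.
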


\begin{proof}

$(1)\implies(3):$ If $F$ is a regular closed, there exist disjoint open sets $U,V$ such that $F\subseteq U$ and $\A\setminus F\subseteq V$. By Lemma~\ref{lemma: separa por clopen}, $F\cap\A$ and $\A\setminus F$ are separated by \change{clopen sets}.

$(2)\iff(3):$ \change{This is clear by Lemma~\ref{lemma: sepclop}.}

$(3)\implies(4):$ If $F$ is regular closed set of $\psispc{\A}$, let $C$ be a \change{clopen set} such that $F\cap\A\subseteq C$ and $\A\setminus F \subseteq \psispc{\A}\setminus C$, it follows that:
\[
F\subseteq C\cup ( \underbrace{(F\cap N)\setminus C}_{\text{\change{clopen set}}} ) = C\cup (F\cap N)
\]

\change{It is straightforward to verify that  $Y\doteq (F\cap N)\setminus C$ is a clopen set.}

$(4)\implies(5):$ Let $F, K\subseteq \Psi(\A)$ be disjoint closed sets, where $F$ is regular closed. By (4), there exists a \change{clopen set} $C$ such that $F\subseteq C$ and $\A\setminus F\subseteq \Psi(\A)\setminus C$.

Let $C'=C\setminus(K\cap N)$. Clearly, $C'$ is a closed set containing $F$. $K$ is disjoint from $C'$ since $K\cap \A\subseteq \A\setminus F$ is disjoint from $C$. \change{It is straightforward to verify that $C'$ is also open}.

$(5)\implies(1):$ Trivial.
\end{proof}
This characterization will be useful in the next section to provide an example of an almost disjoint family which is almost-normal but not normal (consistently).

\section{An almost-normal family which is not normal}
In this section we partially answer Question \ref{question42} by using iterated forcing to create a model for ZFC+CH which has an almost-normal almost disjoint family which is not normal. We will use the equivalence between (1) and (2) of Theorem \ref{teo: equivalents to almost-normal} and a generalization of the notion of $Q$-set.

Given $X\subseteq 2^\omega$, the almost disjoint family over $N=2^{<\omega}$ induced by $X$ is the family $\A_X=\{ A_x: x\in X\}$, were $A_x=\{ x|_n: n\in\omega\}$ for each $x\in X$ \change{ and for $F\subseteq 2^\omega$, we denote $\hat{F} = \{x|_n : n\in\omega, x\in F \}$}. As in \cite{qsetsnormality}, we say that an uncountable $X\subseteq 2^\omega$ is a $Q$-set iff every subset of $X$ is an $F_\sigma$ of $X$. The following folklore result holds (a proof can be found in Proposition 2.2 of \cite{qsetsnormality}):

\begin{prop}[\cite{qsetsnormality}]\label{prop: A normal iff X Qset}
Given an uncountable $X\subseteq 2^\omega$, $\psispc{\A_X}$ is normal iff $X$ is a $Q$-set.
\end{prop}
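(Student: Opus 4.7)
The plan is to use the criterion (from the previous proposition) that $\Psi(\mathcal{A})$ is normal iff any two disjoint subfamilies of $\mathcal{A}$ can be separated by disjoint open sets, combined with the bijection $x \leftrightarrow A_x$ between $X$ and $\mathcal{A}_X$. The technical engine is the observation that for any $C \subseteq 2^\omega$ one has $A_x \subseteq \hat{C}$ whenever $x \in C$; and if $C$ is closed and $x \notin C$, then some basic clopen $[x|_{n_0}]$ misses $C$, whence $A_x \cap \hat{C} \subseteq \{x|_0,\dots,x|_{n_0-1}\}$ is finite.

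For the direction $\psispc{\mathcal{A}_X}$ normal $\Rightarrow$ $X$ is a $Q$-set, I would fix $Y \subseteq X$ and apply normality to the disjoint closed sets $\mathcal{B} = \{A_x : x \in Y\}$ and $\mathcal{C} = \mathcal{A}_X \setminus \mathcal{B}$ to obtain separating open sets $U_\mathcal{B}, U_\mathcal{C}$. Setting $W = U_\mathcal{B} \cap N$, openness forces $A_x \subseteq^* W$ for $x \in Y$ while disjointness forces $A_x \cap W$ finite for $x \in X \setminus Y$, so
\[
Y \;=\; \bigl\{x \in X : \{n \in \omega : x|_n \in W\} \text{ is infinite}\bigr\} \;=\; X \cap \bigcap_{k \in \omega} \bigcup_{s \in W,\ |s| \geq k} [s],
\]
which exhibits $Y$ as a relative $G_\delta$ of $X$. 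Since $Y$ was arbitrary, taking complements shows every subset of $X$ is a relative $F_\sigma$, as required.

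For the converse, given disjoint $\mathcal{B}, \mathcal{C} \subseteq \mathcal{A}_X$ with corresponding $Y, Z \subseteq X$, I would write $Y = X \cap \bigcup_n C_n$ and $Z = X \cap \bigcup_n D_n$ with each $C_n, D_n$ closed in $2^\omega$ (possible by the $Q$-set hypothesis applied to $Y$ and $Z$). Define
\[
V_\mathcal{B} = \bigcup_{n \in \omega} \Bigl(\hat{C_n} \setminus \bigcup_{j \leq n} \hat{D_j}\Bigr), \qquad V_\mathcal{C} = \bigcup_{n \in \omega} \Bigl(\hat{D_n} \setminus \bigcup_{j \leq n} \hat{C_j}\Bigr).
\]
A short case split on the relative order of the witnessing indices yields $V_\mathcal{B} \cap V_\mathcal{C} = \emptyset$. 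For $x \in Y$, if $n$ is the least index with $x \in C_n$, then $A_x \subseteq \hat{C_n}$ while each $A_x \cap \hat{D_j}$ ($j \leq n$) is finite, so $A_x$ is almost-contained in the $n$-th piece of $V_\mathcal{B}$; an analogous argument handles $Z$. The open sets $U_\mathcal{B} = \mathcal{B} \cup \bigcup_{x \in Y} (A_x \cap V_\mathcal{B})$ and $U_\mathcal{C} = \mathcal{C} \cup \bigcup_{y \in Z} (A_y \cap V_\mathcal{C})$ then separate $\mathcal{B}$ from $\mathcal{C}$.

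The main obstacle is the diagonal bookkeeping in the converse: one must interleave the $\hat{C_n}$'s and $\hat{D_n}$'s so as to simultaneously guarantee that $V_\mathcal{B}$ and $V_\mathcal{C}$ are disjoint \emph{and} that every $A_x$ ($x \in Y$) is almost-contained in $V_\mathcal{B}$ (symmetrically for $Z$). The diagonal subtraction $\hat{C_n} \setminus \bigcup_{j \leq n} \hat{D_j}$ is the standard device that makes both requirements compatible; once it is in place, verifying that $U_\mathcal{B}, U_\mathcal{C}$ are open and disjoint is routine.
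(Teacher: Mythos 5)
Your proof is correct, and the converse direction's shoelace construction $\hat{C_n}\setminus\bigcup_{j\le n}\hat{D_j}$ is exactly the device the paper itself extracts from the cited source in its Lemma~\ref{lema43} (there with the $i<n$ versus $i\le n$ asymmetry, which, as you note, is not essential for disjointness). The forward direction via $W=U_{\mathcal B}\cap N$ and the representation of $Y$ as a relative $G_\delta$ is likewise the standard argument, so this matches the paper's approach.
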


In what follows next we give a similar characterization for almost-normal almost disjoint families. For this purpose, we need the following:

\begin{defin}
An almost $Q$-set in $2^\omega$ is an uncountable subset $X\subseteq 2^\omega$ such that for every $W \subseteq 2^{<\omega}$, $[W]_X=\{x \in X: \forall m\in \omega\,  \exists n\geq m\,( x|_n \in W)\}$ (which is $\{x \in X: |A_x\cap W|=\omega\}$) is an $F_\sigma$ in $X$.
\end{defin}

We note that the definition of $[W]_X$ is absolute for transitive models of ZFC.

The next proof \change{can be extrated from the proof of Proposition~\ref{prop: A normal iff X Qset} of \cite{qsetsnormality}. We write it here for the sake of completeness.}

\begin{lema}\label{lema43}
\change{Given an uncountable $X\subseteq 2^{\omega}$ and disjoint subsets $\mathcal B,\mathcal C\subseteq \A_X$ such that $\widetilde{B} =\{x\in X : A_x \in \mathcal B \}$ and $\widetilde{C}=\{x\in X : A_x \in \mathcal C \}$ are $F_\sigma$ sets in $X$, then there exists a partitioner $J\subseteq 2^{<\omega}$ which separates them.}
\end{lema}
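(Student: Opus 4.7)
My plan is to exploit the $F_\sigma$ decompositions of $\widetilde{B}$ and $\widetilde{C}$ and build $J$ by selecting basic cylinders level by level. I would write $\widetilde{B} = \bigcup_n F_n$ and $\widetilde{C} = \bigcup_n G_n$ with $(F_n)$ and $(G_n)$ increasing sequences of sets closed in $X$; then $F_n \cap G_m = \emptyset$ for all $n,m$ because $\widetilde{B}\cap\widetilde{C}=\emptyset$. Writing $[s] = \{z \in 2^\omega : s \prec z\}$ for the basic clopen cylinder of $2^\omega$, I would set
\[
J \;=\; \bigcup_{n\in\omega}\,\bigl\{\, s\in 2^{<\omega} : |s|\geq n,\ [s]\cap F_n\neq\emptyset,\ [s]\cap G_n=\emptyset\,\bigr\}.
\]

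The verification would proceed symmetrically. Given $x\in F_n$, since $x\notin G_n$ and $G_n$ is closed in $X$, pick a basic $X$-neighborhood $[s_0]\cap X$ of $x$ disjoint from $G_n$; as $G_n\subseteq X$ this upgrades to $[s_0]\cap G_n=\emptyset$, so for every $k\geq\max(n,|s_0|)$ one has $x\in[x|_k]\cap F_n$ and $[x|_k]\subseteq[s_0]$, whence $x|_k\in J$ and $A_x\subseteq^* J$. Conversely, for $x\in G_n$, if $x|_k\in J$ with witness $n'\leq k$, then $n'\geq n$ is impossible because it would give $x\in G_n\subseteq G_{n'}\cap[x|_k]$, contradicting $[x|_k]\cap G_{n'}=\emptyset$; and $n'<n$ is eliminated once $k$ exceeds the finitely many thresholds $|s_{n'}|$ obtained by using $x\notin F_{n'}$ together with closedness of $F_{n'}$ in $X$ to pick $s_{n'}\prec x$ with $[s_{n'}]\cap F_{n'}=\emptyset$. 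Hence $A_x\cap J=^*\emptyset$.

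The main conceptual subtlety I would flag is that the paper's notion of partitioner demands the dichotomy $A_y\subseteq^* J$ or $A_y\cap J=^*\emptyset$ for \emph{every} $y\in X$, whereas the construction above directly controls only those $y$ lying in $\widetilde{B}\cup\widetilde{C}$. In the intended application of the lemma (via Theorem~\ref{teo: equivalents to almost-normal}(2)) one takes $\mathcal{B}=F\cap\A_X$ and $\mathcal{C}=\A_X\setminus F$ for a regular closed $F$, so that $\widetilde{B}\cup\widetilde{C}=X$ and the concern becomes vacuous. This is precisely the setting of the Q-set argument in \cite{qsetsnormality} the authors cite as their blueprint, which is why the construction transports with essentially no modification.
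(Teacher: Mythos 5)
Your construction is correct and is essentially the paper's own shoelace argument in disguise: since $s\in\widehat{F}_n$ exactly when $[s]\cap F_n\neq\emptyset$, your $J$ is a minor variant of the paper's $\bigcup_{n}\bigl(\widehat{F}_n\setminus\bigcup_{i<n}\widehat{G}_i\bigr)$, and your verification runs along the same lines (closedness of the relevant $F_i$, $G_i$ controlling the tails of $A_x$). The caveat you flag --- that only the $A_y$ with $y\in\widetilde{B}\cup\widetilde{C}$ are directly controlled, so $J$ need not literally be a partitioner for all of $\A_X$ --- applies verbatim to the paper's own proof, and as you note it is harmless in the one place the lemma is used, where $\mathcal B\cup\mathcal C=\A_X$.
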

\begin{proof}
\change{Write $\widetilde B= \bigcup\limits_{n\in\omega} F_n$ and $\widetilde C = \bigcup\limits_{n\in\omega} G_n$}, where $F_n$ and $G_n$ are closed in $X$. We proceed with a standard shoelace argument, defining $J_0 = \widehat{F}_0$, $K_0 = \widehat{G}_0\setminus \widehat{F}_0$, $J_n = \widehat{F}_n\setminus \big(\bigcup_{i<n} \widehat{G}_i\big)$, $K_n = \widehat{G}_n\setminus \big( \bigcup_{i\leq n} \widehat{F}_i\big)$. Let $J\doteq \bigcup_{n\in\omega} J_n$. It follows that $J\cap K_m = \emptyset$ for all $m\in\omega$ and we prove that $J$ is a partitioner for $\mathcal B$ and $\mathcal C$.

\change{If $A_x\in \mathcal B$, then $x\in \widetilde B$} so there exists a $n\in\omega$ such that $x\in F_n$. Since $\bigcup\limits_{i<n} G_i$ is closed, there exists $k \in \omega$ that $\{f \in 2^\omega: x|_k\subseteq f\}\cap \bigcup\limits_{i<n} G_i =\emptyset$. Hence, $A_x\subseteq^* J_n\subseteq J$. Similarly, if \change{$A_x \in \mathcal C$}, $A_x\cap J=^* \emptyset$.
\end{proof}

\begin{cor}Given an uncountable $X\subseteq 2^{\omega}$, $\mathcal A_X$ is almost-normal iff $X$ is an almost $Q$-set.
\end{cor}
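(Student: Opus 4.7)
The plan is to combine the equivalence $(1)\iff(2)$ of Theorem~\ref{teo: equivalents to almost-normal} with the observation that every regular closed set of $\psispc{\A_X}$ has the form $\cl(W)$ for some $W\subseteq 2^{<\omega}$, and that under the identification $A_x\leftrightarrow x$ the traces $\cl(W)\cap\A_X$ and $\A_X\setminus \cl(W)$ correspond exactly to $[W]_X$ and $X\setminus[W]_X$. This reduces almost-normality of $\A_X$ to a question about the $F_\sigma$-structure of the sets $[W]_X$ in $X$.

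For the direction ``$X$ is an almost $Q$-set $\Rightarrow$ $\A_X$ is almost-normal'', I fix a regular closed $F = \cl(W)$ and set $\mathcal B = F\cap\A_X$, $\mathcal C = \A_X\setminus F$. Then $\widetilde{\mathcal B} = [W]_X$ is $F_\sigma$ in $X$ by hypothesis. The key point is that $\widetilde{\mathcal C} = X\setminus[W]_X$ is \emph{automatically} $F_\sigma$ in $X$: one writes $X\setminus[W]_X = \bigcup_{m\in\omega}\{x\in X: x|_n\notin W\text{ for all }n\geq m\}$, and each set in this union is closed in $X$ as a countable intersection of clopen sets. Lemma~\ref{lema43} then yields a partitioner separating $\mathcal B$ from $\mathcal C$, and Theorem~\ref{teo: equivalents to almost-normal}(2) delivers almost-normality.

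For the converse, fix $W\subseteq 2^{<\omega}$; applying Theorem~\ref{teo: equivalents to almost-normal}(2) to the regular closed set $F = \cl(W)$ produces a partitioner $J\subseteq 2^{<\omega}$ satisfying $A_x\subseteq^* J$ for every $x\in[W]_X$ and $A_x\cap J =^*\emptyset$ for every $x\in X\setminus[W]_X$. For each $n\in\omega$, put $E_n = \{x\in X: x|_k\in J\text{ for all }k\geq n\}$; each $E_n$ is closed in $X$ as an intersection of clopen sets. I claim $[W]_X = \bigcup_{n\in\omega} E_n$: if $x\in[W]_X$, then $A_x\setminus J$ is finite, so $x\in E_n$ for some $n$; conversely, if $x\in E_n$, then $A_x\cap J$ is cofinite in $A_x$, hence infinite, which is incompatible with $A_x\cap J =^*\emptyset$ and forces $x\in[W]_X$. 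This exhibits $[W]_X$ as $F_\sigma$ in $X$, completing the proof. The only genuinely new content beyond Theorem~\ref{teo: equivalents to almost-normal} and Lemma~\ref{lema43} is the asymmetry between ``$A_x\cap W$ infinite'' (always $G_\delta$) and ``$A_x\cap W$ finite'' (always $F_\sigma$), which is precisely what allows the sufficient direction to go through with the one-sided almost $Q$-set hypothesis; I do not anticipate any real obstacle.
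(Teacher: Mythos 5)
Your proof is correct and takes essentially the same route as the paper: both directions pass through the equivalence $(1)\iff(2)$ of Theorem~\ref{teo: equivalents to almost-normal}, the forward direction writing $[W]_X=\{x\in X: A_x\subseteq^* J\}$ as an $F_\sigma$ via the sets $E_n$, and the converse combining the automatic $F_\sigma$-ness of $X\setminus[W]_X$ (equivalently, that $[W]_X$ is always $G_\delta$) with Lemma~\ref{lema43}. No issues.
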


\begin{proof}
$(\Longrightarrow)$ For $W\subseteq 2^{<\omega}$ fixed, consider the regular closed set $F=\cl_{\A_X}(W)$. Since $\psispc{\A_X}$ is almost-normal, by Theorem~\ref{teo: equivalents to almost-normal}, there exists a partitioner $J\subseteq 2^{<\omega}$ for $\A_X\cap F$ and $=\A_X\setminus F$. It follows that:
\[
[W]_X = \{x\in X: |A_x\cap W| =\omega \} = \{x\in X: A_x \in F\} = \{x\in X: A_x\subseteq^* J\} = \bigcup_{m\in\omega} \bigcap_{n\geq m} \underbrace{\{x\in X: x|_n\in J \}}_{\text{closed in $X$}}. 
\]

Hence, $X$ is an almost $Q$-set in $2^\omega$.

$(\Longleftarrow)$ By Theorem~\ref{teo: equivalents to almost-normal} it suffices to show that for every regular closed set $F$, there exists a partitioner for \change{$\mathcal B=\A_X\cap F$} and \change{$\mathcal C=\A_X\setminus F$}.

If $F$ is a regular closed set in $\A_X$, there exists $W\subseteq 2^{<\omega}$ such that $F=\cl_{\A_X}(W)$. Notice that $[W]_X$ is a $G_\delta$ since:
\[
[W]_X = \bigcap_{m\in\omega} \bigcup_{n\geq m} \underbrace{\{x\in X: x|_n \in W. \}}_{\text{open in $X$}}
\]

Since $X$ is almost $Q$-set, it follows that both \change{$\widetilde{\mathcal B}=[W]_X$} and \change{$\widetilde{\mathcal C}=X\setminus [W]_X$} are $F_\sigma$ in $X$, \change{so by Lemma \ref{lema43} there exists a partitioner for $\mathcal B$ and $\mathcal C$.}
\end{proof}

Before providing the forcing example, notice that if $M, N$ are countable transitive models for ZFC and $M\subseteq N$, then for every $X, Y\subseteq 2^\omega$ in $M$ with $Y\subseteq X$, $Y$ $(Y \text{ is an } F_\sigma \text{ of } X)^M\rightarrow  (Y \text{ is an } F_\sigma \text{ of } X)^N$ since countable sets of $M$ are countable sets of $N$, and since closed/open subsets of $X$ in $M$ are closed/open subsets of $X$ in $N$.

Now we are ready for the main result of this section.
\begin{prop}
\change{Suppose that (in the ground model) $X\subseteq 2^{\omega}$ is infinite, and let $\kappa=\mathfrak c$. Then there exists a c.c.c. forcing notion $\mathbb P$ of size $\kappa$ such that in every forcing extension by $\mathbb P$, $\kappa=\mathfrak c$ and $X$ is an almost $Q$-set (thus, by the previous corollary, $\Psi(\mathcal A_X)$ is almost-normal in the extension).}
\end{prop}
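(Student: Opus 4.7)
My plan is a finite-support iteration $\mathbb{P}=\mathbb{P}_\kappa$ of c.c.c.\ forcings of length $\kappa$, where at each stage $\alpha<\kappa$ a bookkeeping function selects a $\mathbb{P}_\alpha$-name $\dot W_\alpha$ for a subset of $2^{<\omega}$ and $\dot{\mathbb{Q}}_\alpha$ is a $\mathbb{P}_\alpha$-name for a c.c.c.\ forcing that makes $[\dot W_\alpha]_X$ into an $F_\sigma$ of $X$ in $V^{\mathbb{P}_{\alpha+1}}$. Arranging the bookkeeping so that every $W\in(\mathcal{P}(2^{<\omega}))^{V^{\mathbb{P}}}$ is caught at some stage, the upward-absoluteness remark preceding the proposition guarantees that each $[W]_X$ remains $F_\sigma$ of $X$ in $V^{\mathbb{P}}$, establishing that $X$ is an almost $Q$-set there.

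For the single-step forcing $\mathbb{Q}_W$ (with $Y:=[W]_X$), I would take conditions $(s,F,G)$ in which $s\colon n\to[2^{<\omega}]^{<\omega}$ encodes clopen approximations $[s(i)]:=\bigcup_{t\in s(i)}[t]$ containing the intended closed set $C_i$ for each $i<n$, $F\in[X\setminus Y]^{<\omega}$ is a ``promise'' disjoint from every $[s(i)]$, and $G\colon n\to[Y]^{<\omega}$ is a ``commitment'' with each $G(i)\subseteq[s(i)]$. The extension relation refines each $[s(i)]$, enlarges $F$ and $G$, and preserves both disjointness and containment (always achievable since distinct points of $2^\omega$ separate at some finite depth). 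Standard density arguments then force the generic closed sets $C_i=\bigcap_{k}[s^{p_k}(i)]$ to satisfy $\bigcup_i C_i\cap X=Y$, making $Y$ an $F_\sigma$ of $X$. Any two conditions sharing the same $s$ are compatible via $(s,F_1\cup F_2,G_1\cup G_2)$, so $\mathbb{Q}_W$ is $\sigma$-centered (in particular c.c.c.) with $|\mathbb{Q}_W|\leq\kappa$.

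For the iteration, Solovay's theorem gives that $\mathbb{P}$ is c.c.c.\ and preserves cardinals; a routine induction yields $|\mathbb{P}_\alpha|\leq\kappa^{\aleph_0}=\kappa$, whence $|\mathbb{P}|\leq\kappa$ and $\mathfrak{c}=\kappa$ in $V^{\mathbb{P}}$ by nice-name counting. Any $W\subseteq 2^{<\omega}$ in $V^{\mathbb{P}}$ is countable, and c.c.c.\ together with $\kappa=\mathfrak{c}$ having uncountable cofinality places $W$ in some $V^{\mathbb{P}_\beta}$ with $\beta<\kappa$; the bookkeeping then handles it at a later stage, making $[W]_X$ an $F_\sigma$ of $X$ at the following step and hence in $V^{\mathbb{P}}$. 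The main technical obstacle is designing $\mathbb{Q}_W$ so that both the covering of $Y$ and the avoidance of $X\setminus Y$ are achievable by density without breaking $\sigma$-centeredness; the explicit separation of conditions into ``promise'' and ``commitment'' components is what decouples these two density requirements cleanly.
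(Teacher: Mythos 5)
Your proposal is correct and follows the same architecture as the paper's proof: a finite-support c.c.c.\ iteration of length $\kappa=\mathfrak c$, bookkeeping over nice names so that every $W\subseteq 2^{<\omega}$ of the final extension is caught at some stage $\beta<\kappa$ (using the c.c.c.\ and $\operatorname{cf}(\kappa)>\omega$), a single $\sigma$-centered step that makes $[W]_X$ a relative $F_\sigma$ of $X$, upward absoluteness of ``$F_\sigma$ in $X$'' to pass to the final model, and nice-name counting plus cardinal preservation to keep $\mathfrak c=\kappa$.

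The only genuine difference is the single-step poset. The paper uses the Fleissner--Miller forcing $P(A,X)$: finite sets of pairs $(n,s)$ with $s\in 2^{<\omega}$ (declaring $[s]\cap X\subseteq U_n$) and $(n,x)$ with $x\in A$ (promising $x\notin U_n$), the compatibility requirement being $s\not\subseteq x$; the generic then yields $A=\bigcup_n (X\setminus U_n)$. Your poset instead builds the closed pieces directly as decreasing intersections of clopen sets, with finite commitments $G(i)\subseteq Y$ kept inside $[s(i)]$ and finite promises $F\subseteq X\setminus Y$ kept outside all $[s(i)]$. These are dual presentations of the same device; your density arguments do go through (shrinking $[s(i)]$ to exclude a new promise point while retaining the finite $G(i)$ works because distinct reals separate at finite depth, and the sets $G(i)$ and $F$ are automatically disjoint since one lies in $Y$ and the other in its complement), and both posets are $\sigma$-centered since the first coordinate ranges over a countable set. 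The paper's version is slightly leaner to verify and comes with a citation; yours requires writing out the density and amalgamation checks you sketched, but nothing in them fails. One cosmetic point shared with the paper: the conclusion that $X$ is an almost $Q$-set tacitly uses that $X$ is uncountable (as the definition requires), which is preserved since the forcing is c.c.c.
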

\begin{proof}
We will proceed by iterated forcing. For the forcing notation, we adopt the countable transitive approach, where $M$ is a fixed ctm for \change{ZFC. Let $\lambda=|X|$}.

First we study the basic step of the iteration which may be found in \cite{OnQsets}. Given $A\subseteq X$ in $M$, let $P(A, X)$ be the sets of all finite $r \in [\omega\times(2^{<\omega}\cup A)]^{<\omega}$ such that for all $n \in \omega$, $x \in A$ and $s \in 2^{<\omega}$, if $(n, x)\in r$ and $(n, s) \in r$, then $s\not\subseteq x$. We order $P(A, X)$ by $r\leq r'$ ($r$ is stronger than $r'$) iff $r'\subseteq r$. $P(A, X)$ is $\sigma$-centered (thus, c.c.c.) since for all $r, r'$, if $r\cap (\omega\times 2^{<\omega})=r'\cap(\omega\times2^{<\omega})$, $r\cup r' \in P(A, X)$ is a common extension. Also, notice that in $M$, $|P(A, X)|\leq\max\{|X|, \omega\}=\change{\mu}$.

    If $G$ is $P(A, X)$ generic over $M$, consider, for each $n$, the set $U_n=\{x \in X: \exists r \in G\, \exists s \in 2^{<\omega}\, (n, s) \in r \text{ and } s\subseteq x\} \in M[G]$.  Clearly, $U_n$ is an open subset of $X$. Then $A=\bigcup_{n \in \omega}X\setminus U_n$ since the sets $D_y=\{r \in P(A, X): \exists n \in \omega\, (n, y) \in r\}$ and $E^n_x=\{r \in P(A, X): \exists s \in 2^{<\omega}\, s\subseteq x \text{ and } (n, s) \in r\}$ for $x \in X\setminus A$, $y \in A$ and $n \in \omega$ are all dense. Thus, $A$ is a $F_\sigma$ of $X$ in $M[G]$.

    Now we recursively construct, working in $M$ a finite support $\change{\kappa}$-stage iterated forcing construction $(\langle(\mathbb P_\xi, \leq_\xi, \mathbbm 1_\xi): \xi\leq \omega_1\rangle,\langle( \mathring{\mathbb Q}_\xi, \mathring{\leq}_\xi, \mathring{\mathbbm 1_\xi}): \xi< \change{\kappa}\rangle)$. As in \cite{kunen2011set}, if $\zeta, \xi\leq \omega_1$, $i_\zeta^{\xi}$ is the usual complete embedding from $\mathbb P_\zeta$ to $\mathbb P_\xi$. Moreover, if $i:\mathbb P\rightarrow \mathbb Q$ is a complete embedding between forcing posets and $\tau$ is a $\mathbb P$-name, $i_*(\tau)$ is the $\mathbb Q$-name recursively defined as $\{(i_*(\sigma), i(p)): (\sigma, p) \in \tau\}$.

    Fix a function $f$ from $\change{\kappa}$ onto $\change{\kappa\times \kappa}$ such that if $f(\xi)\leq (\zeta, \mu)$, then $\zeta\leq \xi$. We will use $f$ as a bookkeeping device. Each $\mathring{\mathbb Q}_\xi$ will have size $\change{\mu}$ and will be forced by $\mathbb P_\xi$ to have the ccc, therefore for each $\xi$, $\mathbb P_\xi$ will have cardinality at most $\change{\kappa}$ and will have the ccc as well.
    
    Suppose we have constructed $(\langle(\mathbb P_\zeta, \leq_\zeta, \mathbbm 1_\zeta): \zeta\leq \xi\rangle,\langle( \mathring{\mathbb Q}_\zeta, \mathring{\leq}_\zeta, \mathring{\mathbbm 1_\zeta}): \zeta< \xi\rangle)$ for some $\xi<\change{\kappa}$. We must determine $(\mathring{\mathbb Q}_\xi, \mathring{\leq}_\xi, \mathring{\mathbbm 1_\xi})$. Suppose that for each stage $\zeta<\xi$ we have also listed all $\mathbb P_\zeta$-nice names for subsets of $\check \omega$ as $(\tau_\zeta^\mu: \mu<\change{\kappa})$. This is possible since \change{$|\mathbb P_\zeta|\leq \kappa=\kappa^\omega$ and has since $\mathbb P_\zeta$ has the countable chain condition}. List all $\mathbb P_\xi$-nice names for subsets of $\check \omega$ as $(\tau_\xi^\mu: \mu<\change{\kappa})$ as well.
    
    Let $f(\xi)=(\zeta, \mu)$. Since $\zeta\leq \xi$, the name ${(i_{\zeta}^\xi)}_*(\tau_\zeta^\mu)$ is a nice $\mathbb P_\xi$-name for a subset of $\check \omega$. Let $( \mathring{\mathbb Q}_\xi, \mathring{\leq}_\xi, \mathring{\mathbbm 1_\xi})$ be such that $\mathbbm 1_{\alpha}\Vdash_{\alpha}( \mathring{\mathbb Q}_\xi, \mathring{\leq}_\xi, \mathring{\mathbbm 1_\xi})\approx P\left(\left[{i_{\zeta}^{\xi}}_*(\tau_\zeta^\mu)\right]_{\check X}, \check X\right)$ and $|\mathring{\mathbb Q}_\xi|\leq \change{\mu}$ (which is possible since $|P(A, X)|=\change{\lambda}$. For instance, we may take $\mathring{\mathbb Q}_\xi$ to be $\change{\check \lambda}$). 
    
    Let $\mathbb P=\mathbb P_{\change{\kappa}}$.
    
    $\mathbb P$ has the ccc and $|\mathbb P|=\change{\kappa}$, \change{$\mathfrak c\leq \kappa$ in any extension by $\mathbb P$ (by counting nice names of subsets of $\check \omega$), so $\mathfrak c=\kappa$ must hold since $\mathbb P$ preserves cardinals since it has the countable chain condition}.
    
    Let $G$ be $\mathbb P$-generic over $M$. We claim $X$ is an almost $Q$-set in $M[G]$. It is uncountable since $\mathbb P$ preserves cardinals. Now let $W$ be a subset of $\omega$ in $M[G]$. \change{Since $\text{cf}(\kappa)^M>\omega$}, There exists $\zeta<\change{\kappa}$ such that $W \in M[G_\zeta]$, where $G_\zeta=(i_{\zeta}^{\change{\kappa}})^{-1}[G]$. There exists $\mu<\change{\kappa}$ such that $W=\val(\tau_\zeta^{\mu}, G_\zeta)$. Let $\xi$ be such that $f(\xi)=(\zeta, \mu)$. Then, since $W=\val({i_{\zeta}^{\xi}}_*(\tau_\zeta^\mu), G_\xi)$. Hence, by the choice of $\mathring{\mathbb Q}_\xi$, $M[G_{\xi+1}]$ contains a $P([W]_X, X)$-generic filter over $M[G_\xi]$, so, in $M[G_{\xi+1}]$, $[W]_X$ is an $F_\sigma$-subset of $X$, hence, the same happens in $M[G]$.
\end{proof}
\begin{cor} The following are relatively consistent with ZFC:
\begin{enumerate}
    \item There exists an almost-normal almost disjoint family which is not normal plus CH.
    \item There exists an almost-normal almost disjoint family of size $\omega_1<\mathfrak c$.
\end{enumerate}
\end{cor}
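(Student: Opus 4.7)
Both parts follow by applying the preceding proposition with an appropriate choice of ground model and of $X$; the crucial point is that $\mathbb P$ preserves the ground-model value of $\mathfrak c$.

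For part (1) I would take a countable transitive model $M$ of $\mathrm{ZFC}+\mathrm{CH}$ and fix any $X\subseteq 2^\omega$ with $|X|=\aleph_1$ in $M$. Applying the previous proposition (with $\kappa=\mathfrak c^M=\aleph_1$) yields a c.c.c.\ poset $\mathbb P$ of cardinality $\aleph_1$ such that, in every generic extension $M[G]$, $\mathfrak c=\aleph_1$ and $X$ is an almost $Q$-set; by the preceding corollary, $\mathcal A_X$ is then almost-normal in $M[G]$, and of course CH holds in $M[G]$. It remains to verify that $X$ is \emph{not} a $Q$-set in $M[G]$, so that $\mathcal A_X$ fails to be normal by Proposition~\ref{prop: A normal iff X Qset}. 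For this I would run a standard cardinality count: since $\mathbb P$ has the c.c.c.\ cardinals are preserved, so $|X|=\aleph_1$ in $M[G]$; the subspace topology on $X$ inherited from $2^\omega$ has at most $\mathfrak c=\aleph_1$ open sets, hence at most $\aleph_1^{\aleph_0}=\aleph_1$ many $F_\sigma$-subsets in $M[G]$, while $|\mathcal P(X)|=2^{\aleph_1}\geq\aleph_2$; hence some subset of $X$ must fail to be $F_\sigma$.

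For part (2) the same recipe works starting from a different ground model. I would take a countable transitive model $M$ satisfying $\mathfrak c\geq\aleph_2$ (for example the Cohen extension of a model of CH adding $\aleph_2$ reals), fix $X\subseteq 2^\omega$ with $|X|=\aleph_1$ in $M$, and apply the previous proposition with $\kappa=\mathfrak c^M\geq\aleph_2$. In the extension $M[G]$ one has $\mathfrak c=\kappa\geq\aleph_2$ together with $X$ an almost $Q$-set, so by the preceding corollary $\mathcal A_X$ is an almost-normal almost disjoint family of cardinality $\aleph_1<\mathfrak c$.

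The real work has already been done in the preceding proposition, so I do not expect any genuine obstacle here. The only non-routine step is the cardinality count in part (1), which is precisely what is needed to witness that the forcing delivers almost-normality strictly weaker than normality; for part (2) even that is unnecessary, since the almost-normality of $\mathcal A_X$ together with $|X|=\aleph_1<\mathfrak c$ in $M[G]$ gives the conclusion directly.
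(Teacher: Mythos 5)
Your argument is correct and follows the same overall route as the paper: both parts are obtained by applying the preceding proposition to a suitably chosen ground model, with part (1) run over a model of CH and part (2) over a model with $\mathfrak c\geq\aleph_2$. The one place you diverge is the verification that $\mathcal A_X$ is not normal in part (1): the paper invokes Jones's Lemma in $\Psi(\mathcal A_X)$ (a separable space with a closed discrete set of size $\omega_1$ cannot be normal unless $2^{\omega_1}\leq\mathfrak c$), whereas you count $F_\sigma$-subsets of $X$ directly ($\leq\mathfrak c^{\aleph_0}=\mathfrak c=\aleph_1$ of them versus $2^{\aleph_1}\geq\aleph_2$ subsets) and then apply the $Q$-set characterization of normality. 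Both arguments reduce to the same cardinal inequality $2^{\omega_1}>\mathfrak c$, so the difference is cosmetic; your count is slightly more self-contained, while the paper's Jones's Lemma argument has the advantage of applying uniformly to both parts (the paper chooses the ground model for part (2) with $2^{\omega_1}=\omega_3>\mathfrak c=\omega_2$ precisely so that non-normality also holds there, which the literal statement of (2) does not require and which you reasonably omit).
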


\begin{proof}
\change{For 1., apply the previous proposition assuming $\mathfrak c=\kappa =\omega_1=|X|$. For 2. assume, for concreteness, that in the ground model, $|X|=\omega_1<\mathfrak c=\omega_2<2^{\omega_1}=\omega_3$.}

\change{In both examples $\Psi(\mathcal A_X)$ is not normal, because if it was normal, then by Jones's lemma we would have (in the extension) that $2^{|\Psi(\mathcal A_X)|}=2^{|X|}=2^{\omega_1}\leq \mathfrak c$, contradicting $2^{\omega_1}=2^{\mathfrak c}>\mathfrak c$ in the first case, and $2^{\omega_1}\geq \omega_3>\mathfrak c$ in the second case. This last $\geq$ inequality holds since $\mathbb P$ preserves cardinals due to the countable chain condition.}
\end{proof}

Thus, it is consistent that there exists an almost-normal almost disjoint family of cardinality $\mathfrak c$ (therefore, not normal), which gives a partial answer to Question \ref{question42}. The  almost disjoint families we constructed are not MAD since no $\mathcal A_X$ is a MAD family (since it can be extended by an infinite antichain of $2^{<\omega}$), so Question \ref{question43} remains fully open.

\section{Semi-normality in Isbell-Mrówka spaces}
In the previous section we have constructed an almost disjoint family which is almost-normal but is not normal by using iterated forcing. We do not know if such an almost disjoint family exists in ZFC. Due to Proposition \ref{semialmost}, a semi-normal almost disjoint family $\mathcal A$ is normal iff $\mathcal A$ is \change{almost-normal}. \change{Thus, the study of semi-normality may come in handy when looking for an almost-normal a.d. family which is not normal in ZFC.}

Semi-normality can be translated in combinatorial terms for Isbell-Mrowka spaces. In the end, it follows that semi-normality is equivalent to a weaker form of separation, which was considered by Dow in \cite{Adow} and Brendle in \cite{Brendle} and we state next:

\begin{defin}
Let $\A$ an almost disjoint family (over $N$) and two subfamilies $\mathcal B, \mathcal C\subseteq \A$, we say that a set $X\subseteq N$ weakly separates $\mathcal B$ and $\mathcal C$ if for all $b\in\mathcal B$ and $c\in\mathcal C$, $|X\cap b|<\omega$ and $|X\cap c|=\omega$. 

We say that $\A$ is weakly separated if for every $\mathcal B\subseteq \A$, the pair $\mathcal B$ and $\A\setminus\mathcal B$ can be weakly separated. 
\end{defin}

Now we are ready to present the combinatorial characterization of semi-normality in Isbell-Mrówka spaces. (2) is a combinatorial property that looks like semi-normality.
\begin{prop}\label{prop: semi-normal equiv frac sep}
 Let $\A$ be an almost disjoint family. The following are equivalent:
 \begin{enumerate}[label=(\arabic*)]
     \item $\psispc{\A}$ is semi-normal;
     \item For each $\mathcal B \subseteq \A$ and each $W\subseteq N$ such that $b\subseteq^* W$ for all $b\in \mathcal B$, there exists $W_0\subseteq W$ satisfying the following:
     \begin{equation}\label{eq: equiv semi-normal}
         \text{for all } a\in\A: \quad a\in \mathcal B \iff a\subseteq^* W_0. 
     \end{equation}
     \item $\A$ is weakly separated.
 \end{enumerate}
\end{prop}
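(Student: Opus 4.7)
The plan is to first nail down a clean description of regular open sets in $\psispc{\A}$, and then use it as a common tool for all three implications. The key observation I would record is: for any $Y \subseteq N$, the set $V_Y := Y \cup \{a \in \A : a \subseteq^* Y\}$ is regular open, and conversely every regular open $V$ equals $V_{V \cap N}$. This follows by computing $\cl(V_Y) = Y \cup \{a \in \A : |a \cap Y| = \omega\}$ and then $\inter(\cl(V_Y)) = V_Y$ (a point $a \in \A$ is interior exactly when some cofinite tail of $a$ lies in $Y$).

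For (1) $\Rightarrow$ (2), given $\mathcal{B} \subseteq \A$ and $W$ with $b \subseteq^* W$ for all $b \in \mathcal{B}$, I would note that $\mathcal{B}$ is closed (every subset of $\A$ is) and $\mathcal{B} \cup W$ is open (this is exactly the hypothesis on $W$). Semi-normality yields a regular open $V$ with $\mathcal{B} \subseteq V \subseteq \mathcal{B} \cup W$, and then $W_0 := V \cap N \subseteq W$ satisfies $a \subseteq^* W_0 \iff a \in V \iff a \in \mathcal{B}$ by the regular-open description. For (2) $\Rightarrow$ (1), given closed $F$ and open $U \supseteq F$, I apply (2) to $\mathcal{B} = F \cap \A$ and $W = U \cap N$ to obtain $W_0$, and propose the regular open set $V := V_{W_0 \cup (F \cap N)}$. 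The inclusion $F \subseteq V$ is immediate; the delicate step is $V \subseteq U$, which hinges on closedness of $F$: any $a \in \A \setminus F$ satisfies $|a \cap (F \cap N)| < \omega$, so $a \subseteq^* W_0 \cup (F \cap N)$ would force $a \subseteq^* W_0$ and hence, by (2), $a \in \mathcal{B}$, a contradiction.

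Finally, (2) $\iff$ (3) is simple complementation. For (2) $\Rightarrow$ (3), apply (2) to $\mathcal{B}$ with $W = N$ and set $X := N \setminus W_0$; then $X$ weakly separates $\mathcal{B}$ from $\A \setminus \mathcal{B}$. For (3) $\Rightarrow$ (2), given $\mathcal{B}$ and $W$, take a weak separator $X$ for the pair $(\mathcal{B}, \A \setminus \mathcal{B})$ and set $W_0 := W \setminus X$; the required equivalence follows since $b \subseteq^* W$ absorbs the finite intersection $b \cap X$, while $|c \cap X| = \omega$ precisely witnesses $c \not\subseteq^* W_0$. The main technical obstacle I anticipate is the construction in (2) $\Rightarrow$ (1): enlarging $W_0$ by $F \cap N$ is needed to cover the $N$-part of $F$, yet one must guarantee no stray $a \in \A \setminus \mathcal{B}$ gets swept into $V_{W_0 \cup (F \cap N)}$, which is where the closedness of $F$ plays its essential role.
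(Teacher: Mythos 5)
Your proof is correct and takes essentially the same approach as the paper: the witnesses you produce ($W_0=V\cap N$, $X=N\setminus W_0$, $W_0=W\setminus X$, and the regular open set $V_{W_0\cup(F\cap N)}=\mathcal B\cup(F\cap N)\cup(W\setminus X)$) coincide with those in the paper's proof, which runs the cycle $(1)\Rightarrow(2)\Rightarrow(3)\Rightarrow(1)$ instead of your pairing of biconditionals. Isolating the characterization of regular open sets as $V_Y=Y\cup\{a\in\A:a\subseteq^*Y\}$ up front is a tidy reorganization, but the underlying argument is the same.
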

\begin{proof}
$(1)\implies (2)$: Fix $\mathcal B \subseteq \A$ and $W\subseteq N$ such that each $b\in \mathcal B$, $b \subseteq^* W$. Since $\mathcal B$ is closed and $\mathcal B \cup W$ is open, there exists a regular open set such that $\mathcal B\subseteq V \subseteq \mathcal B\cup W$. We claim that \eqref{eq: equiv semi-normal} holds for $W_0\doteq V\cap N$:

If $b\in \mathcal B$, then $b\in V$, so $b\subseteq^* W_0$. On the other hand, if $b\in\A$ is such that $b\subseteq^* W_0$, then $b\in \cl (W_0)= \cl (V)$. Since $b\subseteq^* V$, it follows that $b\in \inter (\cl (V)) = V \subseteq \mathcal B \cup  W$, hence $b\in\mathcal B$.

$(2)\implies (3)$: Fix $\mathcal B\subseteq \A$. By hypothesis, there exists $W_0\subseteq N$ satisfying \eqref{eq: equiv semi-normal}. We claim that $X=N\setminus W_0$ weakly separates $\mathcal B$ and $\A\setminus \mathcal B$. Indeed, if $b\in\mathcal B$ then $b\subseteq^* W_0$, so $b\cap (N\setminus W_0)=b\cap X$ is finite. On the other hand, if $a\in\A\setminus \mathcal B$, then $a\setminus W_0 = a\cap X$ is infinite since $a\nsubseteq^* W_0$.

$(3)\implies(1)$: Let $F$ closed and $U$ open such that $F\subseteq U$, then $F= \mathcal B \cupdot K$ where $\mathcal B = F\cap \A$ and $K= F\cap N$. By hypothesis, there exists $X\subseteq N$ such that for each $b\in\mathcal B$, $|b\cap X|<\omega$ and for each $a\in\A\setminus\mathcal B$, $|a\cap X|=\omega$. Consider $W=U\cap N$ and let $V=\mathcal B \cup K\cup (W\setminus X)$, then $F\subseteq V\subseteq U$. We claim that $V$ is a regular open set.

Clearly, $V$ is open. For the regularity, let $x\in \inter(\cl(V))$. If $x\in N$, then $x\in K\cup (W\setminus X)\subseteq V$. If $x\in\A$, then $x\subseteq^* \cl(V)$ and it follows that $x\subseteq^* K\cup(W\setminus X)$. In the case of $x\cap K$ is infinite,  $x\in F\subseteq V$ since $F$ is closed, otherwise $x\subseteq^* (W\setminus X)$ and it follows that $x\cap X$ is finite, thus $x\in \mathcal B \subseteq V$. 
\end{proof}

Recall a subset $\mathcal A$ of $[\omega]^{\omega}$ is centered iff every finite subset of $\mathcal A$ has infinite intersection, and a pseudointersection of $\mathcal A$ is an infinite set $X$ such that $X\subseteq^*A$ whenever $A \in \mathcal A$. The pseudointersection number $\mathfrak p$ is defined as the least size of a centered subset of $[\omega]^\omega$ which does not admit a pseudointersection. It is well known that $\omega_1\leq \mathfrak p\leq \mathfrak a$ \cite{blass2010}.

In \cite{Brendle}, Brendle observes that $\mathfrak{p}\leq\mathfrak{ap}$, where $\mathfrak{ap}$ is defined as the smallest cardinal $\kappa$ for which there exists an almost disjoint family $\A$ of size $|\A|=\kappa$ that is not weakly separated. The reader can verify this inequality directly by applying the following famous classical result\footnote{The proof can be found in \cite[Theorem 2.15]{kunnen1980} were it is defined a $\sigma$-centered order, so the hypothesis about $MA(\kappa)$ can be replaced by limiting the size of $\mathcal C$ and $\mathcal D$ by $\mathfrak p$.}:

\begin{prop}\label{prop: dibraldinho}
Given $\mathcal C, \mathcal D\subseteq \pow(N)$ such that $\max\{|\mathcal C|, |\mathcal D|\}<\mathfrak p$ and for all $x\in\mathcal D$ and $F\in [\mathcal C]^{<\omega}$, $|x\setminus \bigcup F |=\omega$. Then, there exists $d\subseteq N$, such that for each $x\in\mathcal C$, $|d\cap x|<\omega$ and for each $y\in \mathcal D$, $|d\cap y|=\omega$.
\end{prop}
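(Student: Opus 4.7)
The plan is to invoke Bell's theorem, $\mathfrak{p}=\mathfrak{m}_{\sigma\text{-centered}}$, by introducing a $\sigma$-centered poset whose generic filter encodes the desired set $d$. This is the standard almost-disjoint coding trick: conditions are finite approximations to $d$ together with finite promises about which elements of $\mathcal C$ the set $d$ will eventually avoid.

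Concretely, I would let $\mathbb P$ be the set of pairs $(s,F)$ with $s\in [N]^{<\omega}$ and $F\in [\mathcal C]^{<\omega}$, ordered by $(s',F')\leq (s,F)$ iff $s\subseteq s'$, $F\subseteq F'$, and $(s'\setminus s)\cap \bigcup F=\emptyset$. The intended reading is that $s$ sits inside $d$ and, off of $s$, $d$ is disjoint from every member of $F$. The poset is $\sigma$-centered: any finitely many conditions sharing the same first coordinate $s$ have common extension $(s,F_1\cup\cdots\cup F_k)$, and $[N]^{<\omega}$ is countable, so $\mathbb P$ decomposes into countably many centered pieces.

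Next I would exhibit the dense sets. For each $x\in\mathcal C$ the set $D_x=\{(s,F)\in\mathbb P : x\in F\}$ is dense, since from $(s,F)$ one simply passes to $(s,F\cup\{x\})$. For each $y\in\mathcal D$ and $n\in\omega$, the set $E_{y,n}=\{(s,F)\in\mathbb P : |s\cap y|\geq n\}$ is dense: given $(s,F)$, the hypothesis $|y\setminus \bigcup F|=\omega$ allows one to pick $t\in [y\setminus (\bigcup F\cup s)]^{n}$ and form the legitimate extension $(s\cup t,F)$. The whole family $\{D_x : x\in\mathcal C\}\cup\{E_{y,n} : y\in\mathcal D,n\in\omega\}$ has cardinality $\max\{|\mathcal C|,|\mathcal D|,\omega\}<\mathfrak p$, so by Bell's theorem there is a filter $G\subseteq \mathbb P$ meeting all of them. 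Define $d=\bigcup\{s : \exists F\,(s,F)\in G\}$. For $y\in\mathcal D$, meeting $E_{y,n}$ for every $n$ forces $|d\cap y|\geq n$, giving $|d\cap y|=\omega$. For $x\in\mathcal C$, pick $(s_0,F_0)\in G\cap D_x$, so $x\in F_0$. For any $(s,F)\in G$, a common extension $(s^*,F^*)\leq (s,F),(s_0,F_0)$ satisfies $(s^*\setminus s_0)\cap\bigcup F_0=\emptyset$, hence $(s^*\setminus s_0)\cap x=\emptyset$; therefore $s\cap x\subseteq s_0\cap x$ and $d\cap x\subseteq s_0\cap x$ is finite.

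There is no real obstacle beyond correctly combining two things: the use of the hypothesis $|y\setminus\bigcup F|=\omega$ to witness density of the $E_{y,n}$ (this is the only place the hypothesis enters, and it is essential), and the standard $\mathfrak p=\mathfrak m_{\sigma\text{-centered}}$ characterization, which is what turns ``fewer than $\mathfrak p$ dense sets'' into an actual filter. The argument is robust enough that one could equally phrase it directly in terms of a tower/pseudointersection argument, but the $\sigma$-centered forcing presentation is the cleanest and matches the reference in the footnote.
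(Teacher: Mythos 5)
Your proof is correct, and it is precisely the argument the paper points to: the almost-disjoint-coding poset of Kunen's Theorem 2.15, made to work below $\mathfrak p$ via Bell's theorem $\mathfrak p=\mathfrak m_{\sigma\text{-centered}}$, exactly as the paper's footnote indicates. The density computations and the final finiteness/infiniteness arguments are all sound, so there is nothing to add.
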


\begin{cor}
If $\A$ is an almost disjoint family with $|\A|<\mathfrak p$, then $\A$ is semi-normal.
\end{cor}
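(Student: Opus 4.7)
The plan is to reduce semi-normality to the weak separation property via Proposition~\ref{prop: semi-normal equiv frac sep}, and then apply Proposition~\ref{prop: dibraldinho} directly. So the proof will have essentially two steps: verify the hypotheses of Proposition~\ref{prop: dibraldinho} using only the smallness condition $|\mathcal A|<\mathfrak p$ together with the almost disjointness of $\mathcal A$, and then quote the equivalence $(3)\Longleftrightarrow(1)$ of Proposition~\ref{prop: semi-normal equiv frac sep} to conclude semi-normality.

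In more detail, I would fix an arbitrary $\mathcal B\subseteq \mathcal A$ and aim to show that $\mathcal B$ and $\mathcal A\setminus \mathcal B$ can be weakly separated. Set $\mathcal C=\mathcal B$ and $\mathcal D=\mathcal A\setminus \mathcal B$. Both have cardinality at most $|\mathcal A|<\mathfrak p$, so the cardinality hypothesis of Proposition~\ref{prop: dibraldinho} is satisfied. For the ``almost-disjoint from finite unions'' hypothesis, take $x\in \mathcal D$ and $F\in[\mathcal C]^{<\omega}$. Since $x\in \mathcal A\setminus \mathcal B$ and every $f\in F$ belongs to $\mathcal B$, we have $x\neq f$, hence $|x\cap f|<\omega$ by the almost disjointness of $\mathcal A$. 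Because $F$ is finite, $|x\cap\bigcup F|<\omega$, and since $x$ is infinite, $|x\setminus \bigcup F|=\omega$, as required.

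Proposition~\ref{prop: dibraldinho} then yields $d\subseteq N$ such that $|d\cap b|<\omega$ for every $b\in\mathcal B$ and $|d\cap c|=\omega$ for every $c\in\mathcal A\setminus\mathcal B$. This is exactly the definition of $d$ weakly separating $\mathcal B$ and $\mathcal A\setminus\mathcal B$. Since $\mathcal B$ was arbitrary, $\mathcal A$ is weakly separated, and Proposition~\ref{prop: semi-normal equiv frac sep} gives that $\Psi(\mathcal A)$ is semi-normal.

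There is essentially no obstacle: the corollary is a direct bookkeeping application of the two preceding results, and the only point that needs a line of justification is that $\mathcal D$'s elements are $\subseteq^*$-almost disjoint from finite unions of $\mathcal C$'s elements, which is immediate from almost disjointness since a finite union of almost disjoint complements stays cofinite in any member of $\mathcal A$ not in that finite set.
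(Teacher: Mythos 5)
Your proof is correct and follows essentially the same route as the paper: the paper's one-line proof cites the inequality $\mathfrak p\leq\mathfrak{ap}$ together with Proposition~\ref{prop: semi-normal equiv frac sep}, and that inequality is exactly the application of Proposition~\ref{prop: dibraldinho} that you spell out. Your verification of the hypotheses (cardinality below $\mathfrak p$ and cofiniteness of $x\setminus\bigcup F$ via almost disjointness) is precisely the check the paper leaves to the reader.
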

\begin{proof}
This follows from $\mathfrak p\leq \mathfrak{ap}$ and from Proposition~\ref{prop: semi-normal equiv frac sep}.
\end{proof}

\begin{cor}\label{cor: <p implica normal iff almost-normal}
If $\A$ is an almost disjoint family with $|\A|<\mathfrak p$, then $\A$ is normal iff $\A$ is almost-normal.
\end{cor}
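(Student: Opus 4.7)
The plan is to combine the previous corollary (which supplies semi-normality for free under the hypothesis $|\mathcal A|<\mathfrak p$) with Proposition \ref{semialmost} (which factors normality as almost-normality plus semi-normality).

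More precisely, I would proceed as follows. First I would invoke the immediately preceding corollary to conclude that every almost disjoint family $\mathcal A$ with $|\mathcal A|<\mathfrak p$ is semi-normal, which in turn rests on the chain $\mathfrak p\leq \mathfrak{ap}$ together with the combinatorial characterization of semi-normality in Proposition \ref{prop: semi-normal equiv frac sep}. Having semi-normality in hand unconditionally (under the cardinality hypothesis), I would then apply Proposition \ref{semialmost}: since $\Psi(\mathcal A)$ is automatically semi-normal, the equivalence ``normal $\iff$ almost-normal $+$ semi-normal'' collapses to ``normal $\iff$ almost-normal''.

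The forward direction of the biconditional (normal implies almost-normal) is trivial from the definitions and does not even need the cardinality hypothesis, so the content is really in the reverse direction, where we use semi-normality coming from $|\mathcal A|<\mathfrak p$ to upgrade almost-normality to full normality. There is no genuine obstacle here beyond the already-established lemmas; the statement is essentially a two-line corollary, and my write-up would be little more than the citation chain \emph{(previous corollary)} $\Rightarrow$ semi-normal, then Proposition \ref{semialmost} $\Rightarrow$ normal.
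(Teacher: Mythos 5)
Your proposal is correct and is exactly the intended argument: the paper leaves this corollary without an explicit proof precisely because it follows immediately from the preceding corollary (semi-normality whenever $|\mathcal A|<\mathfrak p$) combined with Proposition \ref{semialmost}. Nothing further is needed.
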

This gives us (consistently) a family of uncountable almost disjoint families for which normality and almost-normality are the same. In particular, if $\mathfrak p>\omega_1$, then no Luzin family is almost-normal. One may ask if, consistently, every almost disjoint family is semi-normal, since if this was the case, every almost-normal almost disjoint family would be normal. However, this is false.

\begin{prop}
If an almost disjoint family $\A$ is semi-normal, then $2^{|\mathcal A|}=\mathfrak c$. In particular, almost disjoint families of cardinality $\mathfrak c$ are not semi-normal.
\end{prop}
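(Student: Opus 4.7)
The plan is to invoke the equivalence between semi-normality and weak separation given by Proposition~\ref{prop: semi-normal equiv frac sep}, and then produce an injection from $\pow(\A)$ into $\pow(N)$. Assuming $\A$ is semi-normal, for every $\mathcal B\subseteq \A$ there exists $X_{\mathcal B}\subseteq N$ such that $|b\cap X_{\mathcal B}|<\omega$ for every $b\in\mathcal B$ and $|a\cap X_{\mathcal B}|=\omega$ for every $a\in\A\setminus\mathcal B$. Fix such an $X_{\mathcal B}$ for each $\mathcal B$ (using choice).

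The key observation is that the assignment $\mathcal B\mapsto X_{\mathcal B}$ is injective: from the set $X_{\mathcal B}$ alone we recover
\[
\mathcal B=\{a\in\A:\ |a\cap X_{\mathcal B}|<\omega\},
\]
so different $\mathcal B$ necessarily give different $X_{\mathcal B}$. Therefore $2^{|\A|}=|\pow(\A)|\leq|\pow(N)|=2^{\aleph_0}=\mathfrak c$. Since $\A$ is infinite, $2^{|\A|}\geq 2^{\aleph_0}=\mathfrak c$, hence equality.

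For the second statement, if $|\A|=\mathfrak c$, Cantor's theorem gives $2^{|\A|}=2^{\mathfrak c}>\mathfrak c$, contradicting what we just proved; so such an $\A$ cannot be semi-normal. There is no real obstacle here: the entire argument rests on recognizing that a weakly separating set encodes the subfamily it separates, which makes the counting immediate once the combinatorial reformulation of semi-normality is in hand.
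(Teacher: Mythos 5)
Your proof is correct and follows essentially the same route as the paper: invoke the equivalence of semi-normality with weak separation, then observe that $\mathcal B\mapsto X_{\mathcal B}$ is injective because $\mathcal B$ is recoverable from $X_{\mathcal B}$, giving $2^{|\A|}\leq 2^{\aleph_0}=\mathfrak c$. The explicit remark on how to recover $\mathcal B$ from $X_{\mathcal B}$ is a welcome elaboration of a step the paper leaves implicit.
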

\begin{proof}
If $\A$ is semi-normal, then $\mathcal A$ is weakly separated by Proposition \ref{prop: semi-normal equiv frac sep}. But then we may inject $\mathcal P(\mathcal A)$ into $\mathcal P(N)$ by letting, for each $\mathcal B\subseteq \mathcal A$, $X_\mathcal B$ be a subset of $\mathbb N$ such that, for all $ a \in \mathcal A$, $|a\cap X_{\mathcal B}|=\omega$ iff $a \in \mathcal B$.
\end{proof}

By Proposition \ref{prop: semi-normal equiv frac sep}, $\mathfrak{ap}$ is the least cardinality of a non semi-normal almost disjoint family. In \cite{Brendle}, Brendle showed that $\mathfrak{ap}\leq \min\{\text{add}(\mathcal M), \mathfrak{q}\}$, where $\mathfrak q$ is defined as the least cardinality of a subset of $2^\omega$ which is not a $Q$-set and $\text{add}(\mathcal M)$ is the least cardinality of a collection of meager subsets of $\mathbb R$ whose union is not meager. Thus, there exists a non semi-normal almost disjoint family of size $\leq \min\{\text{add}(\mathcal M), \mathfrak q\}$. In particular, this discussion wields the following:

\begin{cor}If $\text{add}(\mathcal M)=\omega_1$, there exists a non semi-normal almost disjoint family of size $\omega_1$.
\end{cor}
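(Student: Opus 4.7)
The corollary is essentially an immediate application of the machinery already assembled in the section, so my proposal is short. The plan is to combine Brendle's inequality $\mathfrak{ap}\leq \text{add}(\mathcal M)$ (cited just before the statement) with the characterization in Proposition~\ref{prop: semi-normal equiv frac sep}, and then verify the size is exactly $\omega_1$ rather than countable.

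First I would note that, by definition of $\mathfrak{ap}$ together with Proposition~\ref{prop: semi-normal equiv frac sep}, the cardinal $\mathfrak{ap}$ is exactly the least size of a non semi-normal almost disjoint family. Thus Brendle's bound $\mathfrak{ap}\leq \text{add}(\mathcal M)$ gives the existence of a non semi-normal almost disjoint family $\mathcal A$ with $|\mathcal A|\leq \text{add}(\mathcal M)=\omega_1$.

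Next, I would rule out $|\mathcal A|=\omega$. If $\mathcal A$ were countable, then $\Psi(\mathcal A)$ would be second countable, regular, and zero-dimensional, hence metrizable; in particular, it would be normal, and therefore semi-normal, contradicting the choice of $\mathcal A$. So $|\mathcal A|\geq \omega_1$, and combined with the previous paragraph we conclude $|\mathcal A|=\omega_1$.

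I do not foresee any real obstacle: the two ingredients ($\mathfrak{ap}\leq \text{add}(\mathcal M)$ and the equivalence between semi-normality and weak separation) are invoked as black boxes, and the only thing to be careful about is the countable-versus-uncountable distinction, which is handled by the metrizability observation made implicitly at the beginning of the paper (``$\Psi(\mathcal A)$ is metrizable iff $\mathcal A$ is countable'').
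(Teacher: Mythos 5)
Your argument is correct and matches the paper's (implicit) proof: the corollary is drawn directly from the preceding discussion, namely that $\mathfrak{ap}$ is the least size of a non semi-normal almost disjoint family together with Brendle's bound $\mathfrak{ap}\leq\operatorname{add}(\mathcal M)$. Your way of ruling out a countable witness via metrizability of $\Psi(\mathcal A)$ is fine and interchangeable with the lower bound $\omega_1\leq\mathfrak p\leq\mathfrak{ap}$ already recorded in the paper.
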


\section{Generic existence of \texorpdfstring{$(\aleph_0,<\!\mathfrak c)$}{aleph0-c}- separated MAD families}
In \cite{paulsergio}, it is defined the concept of strongly $\aleph_0$-separated almost disjoint family, which is related to almost-normality. They show that every almost-normal almost disjoint family is strongly $\aleph_0$-separated and that an strongly $\aleph_0$-separated exists under CH. Their paper does not say anything about the converse, which we are going to argue to be consistently false. In this section we modify their technique to weaken the CH hypothesis. First, we define a suitable separation concept.
\begin{defin}
We say that an almost disjoint family $\A$ is strongly $(\aleph_0,<\!\mathfrak c)$-separated iff for every  two disjoint $\mathcal B, \mathcal C\subseteq \A$, with $\mathcal B$ countable and $|\mathcal C|<\mathfrak c$, there exists a partitioner $X\subseteq \omega$ for $\mathcal A$ and $\mathcal B$. 
\end{defin} 

Clearly, every strongly $(\aleph_0,<\!\mathfrak c)$-separated almost disjoint family is strongly $\aleph_0$-separated and these concepts are equivalent under CH.

Now we recall the definitions of $\mathfrak b$ and $\mathfrak s$. If $f, g \in \omega^{<\omega}$, we say that $f<^*g$ iff the set $\{n \in \omega: f(n)\geq g(n)\}$ is finite. An unbounded family in $\omega^\omega$ is a set $\mathcal B\subseteq \omega^\omega$ such that for every $f \in \omega^\omega$ there exists $g \in \mathcal B$ such that $g\not <^* f$. The bounding number $\mathfrak b$ is the smallest cardinality of an unbounded family.

We say $\mathcal S\subseteq \mathcal P(\omega)$ is a splitting family iff for every $X\in [\omega]^\omega$ there exists $A \in \mathcal S$ such that both $X\setminus A$ and $X\cap A$ are infinite. The splitting $\mathfrak s$ is the least size of a splitting family.

It is well known that $\mathfrak p\leq \mathfrak s\leq \mathfrak c$ and that $\mathfrak p\leq \mathfrak b\leq \mathfrak a$, and all inequalities are consistent to be strict \cite{blass2010}.

\begin{lema}\label{lema:separa enumeravel do complemento <b}
Let $\A$ an almost disjoint family with $|\A|<\mathfrak b$. If $\mathcal B\subset \A$ is a countable set, then there exists a partitioner for $\mathcal B$ and $\A\setminus \mathcal B$. In particular, $\A$ is strongly $(\aleph_0,<\!\mathfrak c)$-separated.
\end{lema}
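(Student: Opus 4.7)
The plan is to reduce the problem to a domination argument in $\omega^\omega$, since $|\mathcal{A}|<\mathfrak{b}$ is precisely what one needs to bound a small family of functions. Identifying $N$ with $\omega$, I would enumerate $\mathcal B=\{b_n:n\in\omega\}$ and, for each $n$, fix the increasing enumeration $b_n=\{b_n(k):k\in\omega\}$. The target is a partitioner of the form
\[
X=\bigcup_{n\in\omega}(b_n\setminus E_n),
\]
where each $E_n$ is an initial segment of $b_n$ (in the enumeration above) whose size will be chosen later. For any choice of finite $E_n\subseteq b_n$, each $b_n$ satisfies $b_n\setminus X\subseteq E_n$, hence $b_n\subseteq^* X$ automatically.

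The real content is forcing $a\cap X=^*\emptyset$ for every $a\in\mathcal{A}\setminus\mathcal{B}$. For such $a$, the set $a\cap b_n$ is finite by almost-disjointness, so
\[
g_a(n)\doteq\min\{k\in\omega:a\cap b_n\subseteq\{b_n(0),\dots,b_n(k-1)\}\}
\]
is a well-defined element of $\omega^\omega$. The family $\{g_a:a\in\mathcal A\setminus\mathcal B\}$ has cardinality strictly less than $\mathfrak b$, so I can pick $h\in\omega^\omega$ with $g_a<^*h$ for every $a\in\mathcal A\setminus\mathcal B$. Setting $E_n=\{b_n(0),\dots,b_n(h(n)-1)\}$, for each $a\in\mathcal A\setminus\mathcal B$ one has $a\cap(b_n\setminus E_n)=\emptyset$ whenever $g_a(n)\leq h(n)$, i.e.\ for cofinitely many $n$; the remaining finitely many contributions $a\cap(b_n\setminus E_n)\subseteq a\cap b_n$ are each finite, so $a\cap X$ is finite.

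Finally, the ``partitioner for $\mathcal A$'' clause of the definition is automatic in this setup: every $a\in\mathcal{A}$ is either in $\mathcal B$ (whence $a\subseteq^* X$) or in $\mathcal{A}\setminus\mathcal{B}$ (whence $a\cap X=^*\emptyset$). The ``in particular'' clause of the statement then follows by inspection, since any countable $\mathcal B$ and any $\mathcal C\subseteq \mathcal A\setminus\mathcal B$ with $|\mathcal C|<\mathfrak c$ are separated by the partitioner just constructed for the larger pair $(\mathcal B,\mathcal A\setminus\mathcal B)$. There is no real obstacle here beyond locating the right $g_a$ to feed into the bounding number; the choice of $g_a$ as the ``stopping time'' in the enumeration of $b_n$ is exactly what makes a single dominating function $h$ simultaneously push every $a\in\mathcal A\setminus\mathcal B$ off $X$.
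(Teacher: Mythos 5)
Your proof is correct and takes essentially the same route as the paper: enumerate $\mathcal B=\{b_n:n\in\omega\}$, attach to each $a\in\A\setminus\mathcal B$ a function recording where $a\cap b_n$ ends, dominate the resulting family of size $<\mathfrak b$, and let $X$ be the union of the corresponding tails of the $b_n$. The only (cosmetic) difference is that the paper measures the cut-off by $f_a(n)=\sup(a\cap b_n)$ and removes $b_n\cap g(n)$, whereas you measure it by position in the increasing enumeration of $b_n$.
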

\begin{proof}
Let $\mathcal B=\{b_n:n\in\omega\}$ list all elements of $\mathcal B$. For each $a\in \A\setminus \mathcal B$, consider the function $f_a\in\omega^\omega$ defined by $f_a(n)= \sup(a\cap b_n)$. Since $\mathcal F = \{f_a\in \omega^\omega: a\in\A\setminus \mathcal B\}$ is family of functions with $|\mathcal F|<\mathfrak b$, there exists $g\in\omega^\omega$ such that $f_a<^* g$, for all $a\in\A\setminus \mathcal B$.

Let $X=\bigcup\limits_{n\in\omega} (b_n\setminus g(n))$. We claim that $X$ is a separator for $B$ and $\A\setminus \mathcal B$.

Clearly, we have that $b_n\subseteq^* X$, for all $n\in\omega$. Given $a\in\A\setminus \mathcal B$, since $g>^* f_a$, there exists $k\in\omega$ such that $b_n\setminus g(n)=\emptyset$ for all $n\geq k$, thus $X\cap a=^*\emptyset$.
\end{proof}

\begin{cor}
If $\mathfrak p>\omega_1$, there exists a strongly $(\aleph_0, <\!\mathfrak c)$-separated almost disjoint family which is not almost-normal.
\end{cor}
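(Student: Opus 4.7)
The natural candidate is a Luzin almost disjoint family, which exists in ZFC and has cardinality $\omega_1$. My plan is to take such a family $\mathcal A$ and verify that, under the assumption $\mathfrak p>\omega_1$, it satisfies both required properties.

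For the positive property, since Luzin families have size $\omega_1$ and $\omega_1<\mathfrak p\leq \mathfrak b$, we have $|\mathcal A|<\mathfrak b$, so Lemma~\ref{lema:separa enumeravel do complemento <b} applies directly: $\mathcal A$ is strongly $(\aleph_0,<\!\mathfrak c)$-separated. (Note that the lemma gives the stronger conclusion that every countable subfamily can be separated from its complement, which trivially implies the $(\aleph_0,<\!\mathfrak c)$-separation property.)

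For the negative property, the standard property of a Luzin family $\mathcal A=\{a_\alpha:\alpha<\omega_1\}$ is that for every infinite $X\subseteq \omega$, the set $\{\alpha<\omega_1:|a_\alpha\cap X|=\omega\}$ is at most countable, from which it easily follows that $\mathcal A$ admits two uncountable subfamilies that cannot be separated by open sets; hence $\Psi(\mathcal A)$ is not normal. On the other hand, since $|\mathcal A|=\omega_1<\mathfrak p$, Corollary~\ref{cor: <p implica normal iff almost-normal} tells us that normality and almost-normality coincide for $\mathcal A$; thus $\mathcal A$ is not almost-normal. (Indeed, the paragraph following Corollary~\ref{cor: <p implica normal iff almost-normal} already records this remark explicitly.) Combining both, $\mathcal A$ witnesses the corollary.

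There is essentially no obstacle, since everything reduces to invoking Lemma~\ref{lema:separa enumeravel do complemento <b} together with Corollary~\ref{cor: <p implica normal iff almost-normal} and the classical non-normality of Luzin families; the only small point to be pedantic about is the ZFC existence of Luzin families and their failure of normality, both of which are standard.
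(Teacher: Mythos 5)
Your argument is exactly the paper's: combine Lemma~\ref{lema:separa enumeravel do complemento <b} (via $\omega_1<\mathfrak p\leq\mathfrak b$) with Corollary~\ref{cor: <p implica normal iff almost-normal} applied to a non-normal almost disjoint family of size $\omega_1$, with a Luzin family as the concrete witness. One small correction to a side remark: the Luzin property you quote is misstated (for $X=\omega$ the set $\{\alpha<\omega_1:|a_\alpha\cap X|=\omega\}$ is all of $\omega_1$); the correct classical fact is that no two uncountable disjoint subfamilies of a Luzin family can be separated, which is what gives non-normality, so the conclusion you need still stands.
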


\begin{proof}
By Corollary~\ref{cor: <p implica normal iff almost-normal} and Lemma~\ref{lema:separa enumeravel do complemento <b}, every non normal almost disjoint family of size $\omega_1$ is strongly $(\aleph_0, <\!\mathfrak c)$-separated and is not almost-normal. \change{So, assuming $p>\omega_1$, any non-normal almost disjoint family of size $\omega_1$ suffices. E.g., a Luzin family.}
\end{proof}

Given an almost disjoint family $\mathcal A$, In what follows, $\mathcal{J}^+(\A)=\{X \in \omega: |\{a \in \mathcal A: |a\cap X|=\omega\}|\geq \omega\}$. An almost disjoint family is said to be completely separable iff for every $A \in \mathcal J^+(\A)$ there exists $a \in \A$ such that $a \subseteq A$. Completely separable almost disjoint families exist in ZFC \cite{galvin}, however, we don't know if completely separable MAD families exist in ZFC even thought we know they exist in most models \cite{Hrusak2014}.
A concept related to completely separability is the true cardinality $\mathfrak c$.

In \cite[Definition~1.2]{genericMAD}, the authors introduce the definition of generic existence of a MAD family in terms of a given property $P$. More precisely, we say that MAD families with a property $P$ exist generically iff all almost disjoint families of size less than $\mathfrak c$ can be extended to a MAD family with the property $P$. In this sense, we have the following result:

\begin{teo}[$\mathfrak b = \mathfrak s =\mathfrak c$] Completely separable MAD families which are strongly $(\aleph_0, <\!\!\mathfrak{c})$-separated exist generically.
\end{teo}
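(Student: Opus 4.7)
The plan is to build $\mathcal A=\mathcal A_0\cup\{a_\alpha:\alpha<\mathfrak c\}$ by transfinite recursion of length $\mathfrak c$ extending a given $\mathcal A_0$ of size less than $\mathfrak c$, maintaining $|\mathcal A_\alpha|<\mathfrak c$ throughout, so that the resulting $\mathcal A$ is completely separable MAD and strongly $(\aleph_0,<\mathfrak c)$-separated. Since $\mathfrak b=\mathfrak c$ forces $\mathfrak c$ to be regular and $\mathfrak c^\omega=\mathfrak c$ holds in ZFC, I can enumerate in length $\mathfrak c$ both $[\omega]^\omega=\{Y_\alpha:\alpha<\mathfrak c\}$ (for diagonalizing towards complete separability and MADness) and all pairs $(\beta,\mathcal B)$ with $\beta<\mathfrak c$ and $\mathcal B$ a countable subset of $\mathcal A_\beta$, arranged as $(\beta_\alpha,\mathcal B_\alpha)_{\alpha<\mathfrak c}$ with $\beta_\alpha\leq\alpha$, in such a way that every such pair appears at some stage.

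At stage $\alpha$ I apply Lemma~\ref{lema:separa enumeravel do complemento <b} to $\mathcal A_\alpha$ (of size $<\mathfrak c=\mathfrak b$) to produce a partitioner $X_\alpha$ of $\mathcal A_\alpha$ of the explicit form $X_\alpha=\bigcup_n(b_n^\alpha\setminus g_\alpha(n))$, where $\mathcal B_\alpha=\{b_n^\alpha:n\in\omega\}$ and $g_\alpha\in\omega^\omega$ dominates $n\mapsto\sup(a\cap b_n^\alpha)$ for every $a\in\mathcal A_\alpha\setminus\mathcal B_\alpha$, so that $b\subseteq^* X_\alpha$ for $b\in\mathcal B_\alpha$ and $c\cap X_\alpha=^*\emptyset$ for $c\in\mathcal A_\alpha\setminus\mathcal B_\alpha$. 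Then, using $\mathfrak s=\mathfrak c$ and a completely separable MAD construction in the Balcar--Simon--Hrušák line, I pick $a_\alpha$ almost disjoint from $\mathcal A_\alpha$, playing the usual role with respect to $Y_\alpha$, and---crucially---satisfying $a_\alpha\cap X_\beta=^*\emptyset$ for every $\beta\leq\alpha$. This last requirement guarantees that each $X_\beta$ remains a partitioner of the final $\mathcal A$ whose ``inside'' side is exactly $\mathcal B_\beta$.

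Granting the construction, verification is straightforward. Complete separability and MADness are delivered by the $(Y_\alpha)$-diagonalization. For strong $(\aleph_0,<\mathfrak c)$-separation, given a pair $(\mathcal B,\mathcal C)$ in $\mathcal A$ with $\mathcal B$ countable and $|\mathcal C|<\mathfrak c$, the regularity of $\mathfrak c$ yields $\beta<\mathfrak c$ with $\mathcal B\cup\mathcal C\subseteq\mathcal A_\beta$; the bookkeeping supplies $\alpha\geq\beta$ with $(\beta_\alpha,\mathcal B_\alpha)=(\beta,\mathcal B)$, and then $X_\alpha$ is a partitioner of the full $\mathcal A$ with $\mathcal B$ on the inside side and $\mathcal A\setminus\mathcal B\supseteq\mathcal C$ on the outside side, as required.

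The main obstacle is the construction of $a_\alpha$ satisfying all three requirements simultaneously. Because $a_\alpha$ will be almost disjoint from each $b_n^\beta$, the condition ``$a_\alpha\cap X_\beta$ finite'' is equivalent to ``$n\mapsto\max(a_\alpha\cap b_n^\beta)$ is eventually below $g_\beta$''. Imposing this jointly for all $\beta<\alpha$ is the domination step where $\mathfrak b=\mathfrak c$ enters; it must be interleaved with almost-disjointness from $\mathcal A_\alpha$ and the $Y_\alpha$-witnessing, which rely on the $\mathfrak s=\mathfrak c$ machinery of completely separable MAD constructions. Fitting these two cardinal-characteristic arguments together---essentially adapting the CH construction of \cite{paulsergio} to the $\mathfrak b=\mathfrak s=\mathfrak c$ setting---is the technical heart of the proof.
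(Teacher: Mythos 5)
Your skeleton matches the paper's: a length-$\mathfrak c$ recursion extending $\mathcal A'$, with $\mathfrak b=\mathfrak c$ feeding Lemma~\ref{lema:separa enumeravel do complemento <b} to produce a partitioner $X_\alpha$ at each stage, a $Y_\alpha$-diagonalization for MADness and complete separability, and bookkeeping over countable index sets so that each pair $(\mathcal B,\mathcal C)$ is eventually handled. But the step you yourself flag as ``the technical heart'' is both left unproved and, as you have specified it, impossible. You require $a_\alpha\cap X_\beta=^*\emptyset$ for \emph{every} $\beta\leq\alpha$, i.e.\ every later generator must land on the \emph{outside} of every earlier partitioner. This conflicts with the $Y_\alpha$-diagonalization: if $\mathcal B_\beta$ is infinite, then $X_\beta=\bigcup_n(b_n^\beta\setminus g_\beta(n))$ is itself a set in $\mathcal J^+$ (infinitely many $b_n^\beta$ meet it in an infinite set), so complete separability forces some $a\in\mathcal A$ with $a\subseteq X_\beta$; arranging the enumeration so that this witness is some $a_\gamma$ with $\gamma\geq\beta$ gives $a_\gamma\cap X_\beta=a_\gamma$ infinite, contradicting your requirement. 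More generally, whenever $Y_\gamma\subseteq^* X_\beta$ for some $\beta\leq\gamma$ and $Y_\gamma\in\mathcal J^+(\mathcal A_\gamma)$, the demands ``$a_\gamma\subseteq Y_\gamma$'' and ``$a_\gamma\cap X_\beta=^*\emptyset$'' cannot both hold. Nor would the weaker condition follow from a domination argument: the functions $g_\beta$ are fixed at stage $\beta$, so there is nothing left to dominate when you build $a_\gamma$; and $\mathfrak b=\mathfrak s=\mathfrak c$ does not give $\mathfrak p=\mathfrak c$, which is what a pseudointersection-style argument for ``$a_\gamma\subseteq^*\omega\setminus X_\beta$ for all $\beta\leq\gamma$'' would need even when it is consistent with $Y_\gamma$.

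The fix --- and the point where the paper's argument genuinely differs from yours --- is that strong $(\aleph_0,<\!\mathfrak c)$-separation only needs each $X_\beta$ to remain a \emph{partitioner} of the final family, i.e.\ each later $a_\gamma$ must be \emph{decided} by $X_\beta$ (either $a_\gamma\subseteq^* X_\beta$ or $a_\gamma\cap X_\beta=^*\emptyset$), not pushed to the outside. Elements of $\mathcal A\setminus(\mathcal B\cup\mathcal C)$ may fall on either side. This is exactly what $\mathfrak s=\mathfrak c$ delivers: having chosen $Y\subseteq Y_\alpha$ almost disjoint from $\mathcal A_\alpha$ (using $|\alpha|<\mathfrak a$), the family $\{X_\gamma\cap Y:\gamma\leq\alpha\}$ has size $<\mathfrak s$ and hence is not splitting on $Y$, so one can pick $a_\alpha\subseteq Y$ unsplit by every $X_\gamma$. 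So $\mathfrak s=\mathfrak c$ is what controls the interaction of later generators with earlier partitioners, while $\mathfrak b=\mathfrak c$ is used only to build each $X_\alpha$; your proposal assigns these roles the other way around and, in its current form, does not yield a construction.
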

\begin{proof}
Let $\mathcal A'$ be an infinite almost disjoint family of size $\kappa<\mathfrak c$ and write $\mathcal A'=\{a_\gamma: \gamma<\kappa\}$ so that $a_\gamma\neq a_\nu$ whenever $\gamma\neq \nu$.

Let $\{B_\beta\in [\mathfrak c]^\omega :\kappa\leq\beta<\mathfrak c\}$ list all countable subsets of $\mathfrak c$ such that for each $\beta$, $B_\beta\subseteq \beta$ and, for all $B\in [\mathfrak c]^\omega$, $|\{\beta: B_\beta = B\}|= \mathfrak c$ and list $[\omega]^\omega = \{Y_\alpha: \kappa\leq \alpha<\mathfrak c\}$.

We will define recursively almost disjoint families $\A_\alpha$, $a_\alpha\in [\omega]^\omega$ and $X_\alpha \subseteq \omega$, for $\kappa\leq\alpha<\mathfrak c$ such that:
\begin{enumerate}[label=(\arabic*)]
    \item $\A_\beta =\{a_\gamma: \gamma<\beta \}$;
    \item $\mathcal A_\kappa=\mathcal A'$.
    \item $\forall \beta<\mathfrak c: \quad \forall \gamma \in B_\beta$, $a_\gamma\subseteq^* X_\beta$;
    \item $\forall \beta<\mathfrak c: \quad \forall \gamma \in \beta\setminus B_\beta$, $a_\gamma\cap X_\beta =^*\emptyset$;
    \item $\forall \gamma<\mathfrak c: \quad \forall \beta \leq\gamma$, $a_\gamma\subseteq^* X_\beta$ or $a_\gamma\cap X_\beta =^*\emptyset$;
    \item if $Y_\beta\in \mathcal {J}^+(\A_\beta)$, $a_\beta\subseteq Y_\beta$ is an infinite subset.
    \item $\forall \eta<\gamma<\mathfrak c: \quad a_\eta\cap a_\gamma$ is finite.
\end{enumerate}

Fix $\alpha<\mathfrak c$ and suppose that $X_\beta$ and $a_\beta$ are defined for $\kappa\leq \beta<\alpha$. Since $B_\alpha$ is countable and $|\alpha|<\mathfrak c=\mathfrak b$, using Lemma~\ref{lema:separa enumeravel do complemento <b}, let $X_\alpha\subseteq\omega$ be a partitioner for $\{a_\xi: \xi \in B_\alpha\}$ and $\{a_\xi: \xi \in \alpha \setminus B_\alpha\}$.

To define $a_\alpha$, notice that since $|\alpha|<\mathfrak a$ there exists an infinite $Y\subseteq \omega$ almost disjoint from $a_\beta$, for all $\beta<\alpha$. In addition, if $Y_\alpha \in \mathcal J^+(\A_\alpha)$, we can take $Y\subseteq Y_\alpha$:

Indeed, if $\{\gamma<\alpha : |a_\gamma\cap Y_\alpha|=\omega \}$ is finite, take $Y\doteq Y_\alpha \setminus \bigcup\{a_\gamma: \gamma<\alpha \wedge |a_\gamma\cap Y_\alpha|=\omega \}$. Otherwise, note that $\mathcal{B}=\{ a_\gamma\cap Y_\alpha:\gamma<\alpha \wedge |a_\gamma\cap Y_\alpha|=\omega\}$ is an almost disjoint family in $Y_\alpha$. Since $|\mathcal{B}|<\mathfrak a$, there exists $Y\subseteq Y_\alpha$ almost disjoint from each element of $\mathcal{B}$.

Since $\mathfrak s =\mathfrak c$, $\{X_\gamma\cap Y: \gamma\leq \alpha \}$ is not a splitting family in $Y$. Thus, there exists $a_\alpha\subseteq Y$ such that for all $\gamma \leq \alpha$, $a_\alpha\cap X_\gamma =^*\emptyset$ or $a_\alpha\subseteq^* X_\gamma$.

 Notice that $\A$ is an almost disjoint family extending $\mathcal A'$ by (2) and (7)

We show that for every infinite $Y\subseteq \omega$, either $Y \notin \mathcal J^+(\A)$ or there exists $\alpha<\mathfrak c$ such that $a_\alpha\subseteq Y$, thus proving $\A$ is MAD and completely separable. If $Y\in \mathcal J^+(\A)$, let $\alpha$ be such that $Y=Y_\alpha$. Then $Y\in \mathcal J^+(\A_\alpha)$, thus, by 6., $a_\alpha\subseteq Y_\alpha$.

Finally, we prove that $\mathcal A$ is $(\aleph_0, <\!\mathfrak c)$-separated. Given an infinite countable set $\mathcal B\subseteq \mathcal A$ and $\mathcal C\in [\mathcal A]^{<\mathfrak c}$ such that $\mathcal B\cap\mathcal C=\emptyset$, let $B=\{\alpha<\mathfrak c: a_\alpha \in \mathcal B\}$ and $C=\{\alpha<\mathfrak c: a_\alpha \in \mathcal C\}$. Notice that $B$ is infinite and countable, $|C|<\mathfrak c$, and $B\cap C=\emptyset$. Let $\alpha_0=\sup C$, which is less than $\mathfrak c$ since $\mathfrak c=\mathfrak b$ is regular, and let $\alpha>\alpha_0$ be such that $B_\alpha=B$. In particular, $B=B_\alpha\subseteq \alpha$ and $C\subseteq \alpha\setminus B_\alpha$. By (3), for all $b \in \mathcal B$, $b\subseteq^*X_\alpha$.

By (4), for all $c\in \mathcal C$, $c\cap X_\alpha=^*\emptyset$ 

By (3) and (4) together by using $\alpha$ in the place of $\beta$, we see that for every $\gamma<\alpha$, $a_\gamma\subseteq^* X_\alpha$ or $a_\gamma\cap X_\alpha=^*\emptyset$. If $\gamma>\alpha$, we apply (5) for this $\gamma$ and $\alpha$ in the place of $\beta$ to conclude that $a_\gamma\subseteq^* X_\alpha$ or $a_\gamma\cap X_\alpha=^*\emptyset$.
\end{proof}

It is well know that the cardinal characteristic $\mathfrak{par}$, as defined in $\cite{blass2010}$, equals the minimum of $\mathfrak b, \mathfrak s$. Thus, the previous theorem could have its hypothesis replaced by $\mathfrak{par}=\mathfrak c$.

\section{Conclusion}

We have answered Question 4.4 of \cite{paulsergio} by providing a counter example in $\beta \omega$ and partially answered Question 4.2 of \cite{paulsergio} by providing an example by using forcing. We have shown that an almost disjoint family is semi-normal iff it is weakly separated, thus, for weakly separated almost disjoint families normality and almost-normality are the same. However, Question 4.2 remains open. We may define $\mathfrak{an}$ as the least cardinality of an almost-normal almost disjoint family which is not normal. We don't know if this number is well defined in ZFC, however, if there is such an almost disjoint family, it follows that $\mathfrak{ap}\leq\mathfrak{an}$. We may refine Question 4.2. as follows:

\begin{question}
Is $\mathfrak{an}$ well defined in ZFC? If there is an almost-normal not normal almost disjoint family, does $\mathfrak{an}=\mathfrak{ap}$ hold?
\end{question}

Recall that in \cite{paulsergio} it was proven that almost-normal almost disjoint families are strongly $\aleph_0$-separated. Here we have defined the concept of strongly $(\aleph_0, <\mathfrak c)$-almost disjoint families and we have proved that strongly $(\aleph_0, <\mathfrak c)$-separation property does not hold for all almost-normal almost disjoint families, at least consistently.
However, the relation between these concepts is not fully understood. Thus, we ask:

\begin{question}
Are almost-normal almost disjoint families strongly $(\aleph_0,<\!\mathfrak c)$-separated? 
\end{question}

\begin{question}
Does CH imply that strongly $\aleph_0$-separated almost disjoint families are almost-normal?
\end{question}

\section{Acknowledgements}

The first author was funded by FAPESP (Fundação de Amparo à Pesquisa do Estado de São Paulo, process number 2017/15502-2). The second author was funded by CNPq (Conselho Nacional de Desenvolvimento Científico e Tecnológico, process number 141881/2017-8).

The authors would like to thank Sergio A. Garcia-Balan and Professor Paul Szeptycki for reading preliminary versions of this paper and for making suggestions. \change{We also would like to thank the referee for his or her suggestions}.


\bibliographystyle{plain}


\begin{thebibliography}{10}

\bibitem{alshammari2020quasi}
Ibtesam~Eid AlShammari and Lutfi Kalantan.
\newblock Quasi-normality of Mrowka spaces.
\newblock {\em European Journal of Pure and Applied Mathematics},
  13(3):697--700, 2020.

\bibitem{blass2010}
Andreas Blass.
\newblock Combinatorial cardinal characteristics of the continuum.
\newblock In {\em Handbook of set theory}, pages 395--489. Springer, 2010.

\bibitem{Brendle}
J{\"o}rg Brendle.
\newblock Dow’s principle and {Q}-sets.
\newblock {\em Canadian Mathematical Bulletin}, 42(1):13--24, 1999.

\bibitem{Adow}
Alan Dow.
\newblock On compact separable radial spaces.
\newblock {\em Canadian Mathematical Bulletin}, 40(4):422--432, 1997.

\bibitem{engelking1977general}
Ryszard Engelking et~al.
\newblock {\em General topology.}
\newblock Heldermann Verlag, 1977.

\bibitem{OnQsets}
William~G Fleissner and Arnold~W Miller.
\newblock On {$Q$} sets.
\newblock {\em Proceedings of the American Mathematical Society},
  78(2):280--284, 1980.

\bibitem{galvin}
Fred Galvin and Petr Simon.
\newblock A \v{C}ech function in {ZFC}.
\newblock {\em Fundamenta Mathematicae}, 193(2):181--188, 2007.

\bibitem{paulsergio}
Sergio~A Garcia-Balan and Paul~J Szeptycki.
\newblock Weak normality properties in {$\Psi$}-spaces.
\newblock {\em arXiv preprint arXiv:2007.05844}, 2020.

\bibitem{genericMAD}
Osvaldo Guzm{\'a}n-Gonz{\'a}lez, Michael Hru{\v{s}}{\'a}k, Carlos~Azarel
  Mart{\'\i}nez-Ranero, and Ulises~Ariet Ramos-Garc{\'\i}a.
\newblock Generic existence of {MAD} families.
\newblock {\em The Journal of Symbolic Logic}, 82(1):303--316, 2017.

\bibitem{HernandezHernandez2018}
F.~Hern{\'a}ndez-Hern{\'a}ndez and M.~Hru{\v{s}}{\'a}k.
\newblock {\em Topology of Mr{\'o}wka-Isbell Spaces}, pages 253--289.
\newblock Springer International Publishing, Cham, 2018.

\bibitem{qsetsnormality}
Fernando Hern{\'a}ndez-Hern{\'a}ndez and Michael Hru{\v{s}}{\'a}k.
\newblock {$Q$}-sets and normality of {$\Psi$}-spaces.
\newblock {\em Topology Proc}, 29:155--165, 2005.

\bibitem{Hrusak2014}
Michael Hru{\v{s}}{\'a}k.
\newblock {\em Almost Disjoint Families and Topology}, pages 601--638.
\newblock Atlantis Press, Paris, 2014.

\bibitem{kalantan}
Lufti Kalantan and Fateh Allahabi.
\newblock Almost and mild normality.
\newblock Avaiable at
  \url{https://www.kau.edu.sa/Files/130/Researches/19623_AlmostAndMildNormality_Kalantan_Allahabi.pdf}.

\bibitem{kalantan2008}
Lutfi~N Kalantan.
\newblock $\pi$-normal topological spaces.
\newblock {\em Filomat}, 22(1):173--181, 2008.

\bibitem{kunnen1980}
K~Kunen.
\newblock Set theory: an introduction to independence proofs, volume 102 of
  studies in logic and the foundations of mathematics, 1980.

\bibitem{kunen2011set}
K.~Kunen.
\newblock {\em Set Theory}.
\newblock Studies in logic. College Publications, 2011.

\bibitem{shchepin1972real}
EV~Shchepin.
\newblock Real functions and near-normal spaces.
\newblock {\em Siberian Mathematical Journal}, 13(5):820--830, 1972.

\bibitem{singal1970almost}
MK~Singal and Shashi~Prabha Arya.
\newblock Almost normal and almost completely regular spaces.
\newblock {\em Glasnik Mat}, 5(25):141--152, 1970.

\bibitem{SingalArya}
M.K. Singal and S.P. Arya.
\newblock On almost normal and almost completely regular spaces.
\newblock {\em Glasnik Mat.}, 5(1):141–152, 1970.

\bibitem{singal1973mildly}
MK~Singal and Asha~Rani Singal.
\newblock Mildly normal spaces.
\newblock {\em Kyungpook Mathematical Journal}, 13(1):27--31, 1973.

\bibitem{tall1977set}
Franklin~D Tall.
\newblock Set-theoretic consistency results and topological theorems concerning
  the normal moore space conjecture and related problems, 1977.

\bibitem{zaitsavrussian}
Zaitsav. V.
\newblock On certain classes of topological spaces and their
  bicompactifications.
\newblock {\em Dokl.Akad. Nauk SSSR}, 178:778 -- 779, 1968.

\end{thebibliography}

\end{document}